\theoremstyle{plain}
\newtheorem{axiom}{Axiom}
\newtheorem{claim}[axiom]{Claim}
\newtheorem{theorem}{Theorem}[section]
\newtheorem{lemma}[theorem]{Lemma}
\newtheorem{proposition}{Proposition}
\newtheorem{corollary}[theorem]{Corollary}
\theoremstyle{definition}
 \newtheorem{remark}{Remark}
\begin{document}

\begin{frontmatter}
\title{Percolation of both signs in a triangular-type 3D Ising model above $T_c$}
%\title{A sample article title with some additional note\thanksref{t1}}
\runtitle{Percolation of both signs in Ising model}
%\thankstext{T1}{A sample additional note to the title.}

\begin{aug}
%%%%%%%%%%%%%%%%%%%%%%%%%%%%%%%%%%%%%%%%%%%%%%%
%% Only one address is permitted per author. %%
%% Only division, organization and e-mail is %%
%% included in the address.                  %%
%% Additional information can be included in %%
%% the Acknowledgments section if necessary. %%
%% ORCID can be inserted by command:         %%
%% \orcid{0000-0000-0000-0000}               %%
%%%%%%%%%%%%%%%%%%%%%%%%%%%%%%%%%%%%%%%%%%%%%%%
\author[A]{\fnms{Jianping}~\snm{Jiang}\ead[label=e1]{jianpingjiang@tsinghua.edu.cn}},
\author[B]{\fnms{Sike}~\snm{Lang}\ead[label=e2]{langsk24@mails.tsinghua.edu.cn}}
%\and
%\author[B]{\fnms{Third}~\snm{Author}\ead[label=e3]{third@somewhere.com}}
%%%%%%%%%%%%%%%%%%%%%%%%%%%%%%%%%%%%%%%%%%%%%%
%% Addresses                                %%
%%%%%%%%%%%%%%%%%%%%%%%%%%%%%%%%%%%%%%%%%%%%%%
\address[A]{Yau Mathematical Sciences Center, Tsinghua University, Beijing 100084, China. \printead[presep={ ,\ }]{e1}}

\address[B]{Qiuzhen College, Tsinghua University, Beijing 100084, China. \printead[presep={,\ }]{e2}}
\end{aug}

\begin{abstract}
Let $\mathbb{T}$ be the two-dimensional triangular lattice, and $\mathbb{Z}$ be the one-dimensional integer lattice. Let $\mathbb{T}\times \mathbb{Z}$ denote the Cartesian product graph. Consider the Ising model defined on this graph with inverse temperature $\beta$ and external field $h$, and let $\beta_c$ be the critical inverse temperature when $h=0$. We prove that for each $\beta\in[0,\beta_c)$, there exists $h_c(\beta)>0$ such that both a unique infinite $+$cluster and a unique infinite $-$cluster coexist whenever $|h|<h_c(\beta)$. The same coexistence result also holds for the three-dimensional triangular lattice.
\end{abstract}

\begin{keyword}[class=MSC]
\kwd[Primary ]{60K35}
\kwd{82B20}
\kwd[; secondary ]{82B05}
\end{keyword}

\begin{keyword}
\kwd{Ising model}
\kwd{percolation}
\kwd{coexistence of infinite clusters}
\end{keyword}

\end{frontmatter}
%%%%%%%%%%%%%%%%%%%%%%%%%%%%%%%%%%%%%%%%%%%%%%
%% Please use \tableofcontents for articles %%
%% with 50 pages and more                   %%
%%%%%%%%%%%%%%%%%%%%%%%%%%%%%%%%%%%%%%%%%%%%%%
%\tableofcontents

\section{Introduction and main result}
For the two-dimensional Ising model on the square lattice $\mathbb{Z}^2$, it was proved by Higuchi \cite{Hig82,Hig93} that there is coexistence of infinite  $+$ star-cluster and infinite $-$ star-clusters ("star" means adding all diagonals of $\mathbb{Z}^2$) when the inverse temperature $\beta\in[0,\beta_c(\mathbb{Z}^2))$ and there is no external field; but neither an infinite $+$cluster nor an infinite $-$cluster exists in the same regime. For the Ising model on the two-dimensional triangular lattice $\mathbb{T}$, it is known (see \cite{GS09} and \cite{BCM10}) that there is  neither an infinite $+$cluster nor an infinite $-$cluster whenever $\beta\in[0,\beta_c(\mathbb{T}))$ and the external field $h=0$.

For the three (and above) dimensional Ising model, the behavior of infinite $+/-$ clusters are strikingly different from the two-dimensional case. It was proved by Campanino and Russo \cite{CR85} that for the Ising model on the cubic lattice $\mathbb{Z}^3$,  there is coexistence of infinite $+$cluster and infinite $-$cluster when $\beta$ is small (with positive distance away from the critical inverse temperature $\beta_c(\mathbb{Z}^3)$) and $|h|$ is small. It is expected that this coexistence result should persist up to $\beta_c(\mathbb{Z}^3)$ and perhaps surprisingly even beyond $\beta_c(\mathbb{Z}^3)$ (see, e.g., \cite{ABL87}). Actually, it is believed that the coexistence of infinite clusters of both signs for $\beta$ slightly larger than $\beta_c(\mathbb{Z}^3)$ and $h=0$ should be useful to understand the roughening transition in 3D \cite{BF82,ACCFR83}. In this paper, we study the coexistence of infinite clusters of both signs in 3D Ising models and we will prove the coexistence holds up to the critical inverse temperature for certain 3D lattices.  When the dimension $d$ is large enough, it was proved by Aizenman, Bricmont and Lebowitz  \cite{ABL87} that there is an infinite $+$cluster in the minus phase (i.e., the infinite-volume measure obtained from all $-$ boundary conditions) for each $\beta\in [0,\beta_+]$ and $h$ larger than some negative number where $\beta_+>\beta_c(\mathbb{Z}^d)$. 

Let $\mathbb{Z}$ be the set of all integers. With a slight abuse of notation, we also use $\mathbb{Z}$ to denote the one-dimensional integer lattice, i.e., with the set of vertices $\mathbb{Z}$ and the set of edges $\{xy: x, y\in\mathbb{Z}, |x-y|=1\}$. Note that $\mathbb{T}$ can be obtained from the square lattice $\mathbb{Z}^2$ by adding one of the diagonals; see Figure \ref{fig:T}. Let $\mathbb{T} \times \mathbb{Z}$ be the Cartesian product of the graphs $\mathbb{T}$ and $\mathbb{Z}$. We consider the Ising model defined on $\mathbb{T} \times \mathbb{Z}$. It is well-known since Peierls \cite{Pei36} that when $h=0$, this model exhibits a phase transition at some critical inverse temperature $\beta_c(\mathbb{T} \times \mathbb{Z}) \in (0,\infty)$. Actually, we also know that the phase transition is sharp \cite{ABF87,DCT16,DCRT19}. 

\begin{figure}
	\begin{center}
		\includegraphics{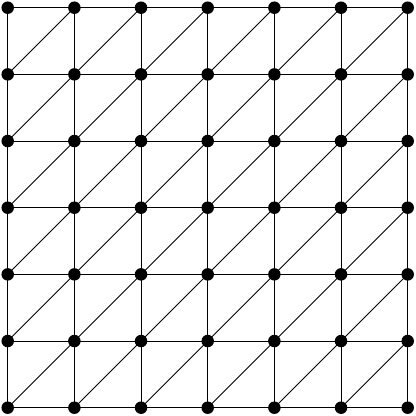}
		\caption{The triangular lattice can be obtained from the square lattice by adding one of the diagonals.}\label{fig:T}
	\end{center}
\end{figure}

The set of vertices for the graph $\mathbb{T} \times \mathbb{Z}$ is $\mathbb{Z}^3$. Let $\Lambda\subset \mathbb{Z}^3$ be a finite subset.  We can (and will) view $\Lambda$ as a subgraph of $\mathbb{T} \times \mathbb{Z}$  with the set of vertices $\Lambda$ and the set of edges $\{xy: xy \text{ is an edge in }\mathbb{T} \times \mathbb{Z}, x,y\in \Lambda\}$. The Ising model on the graph $\Lambda$ at inverse temperature $\beta\geq 0$ with boundary conditions $\eta \in \{-1, 0, +1\}^{\mathbb{Z}^3\setminus \Lambda}$ and external field $\mathbf{h}\in{\mathbb{R}}^{\Lambda}$ is the probability measure $P_{\Lambda,\beta,\mathbf{h}}$ on $\{-1,+1\}^{\Lambda}$ such that
\begin{equation}
	P_{\Lambda,\beta,\mathbf{h}}^{\eta}(\sigma):=\frac{\exp[\beta\sum_{xy\in\Lambda}\sigma_x\sigma_y+\beta\sum_{xy:x\in\Lambda,y\notin \Lambda}\sigma_x\eta_y+\sum_{x\in\Lambda}h_x\sigma_x]}{Z_{\Lambda,\beta,\mathbf{h}}^{\eta}}, ~\forall \sigma \in\{-1,+1\}^{\Lambda},
\end{equation}
where the first sum is over all edges in $\mathbb{T}\times\mathbb{Z}$ with both endpoints in $\Lambda$, the second sum is over all edges in $\mathbb{T}\times\mathbb{Z}$ with exactly one endpoint in $\Lambda$ and the third sum is over all vertices in $\Lambda$, and $Z_{\Lambda,\beta,\mathbf{h}}^{\eta}$ is the partition function. We write $P_{\Lambda,\beta,h}^{\eta}$ if $h_x=h$ for each $x\in \Lambda$, and $P_{\Lambda,\beta,h}$ (respectively, $P_{\Lambda,\beta,h}^{\pm}$) if in addition $\eta_x=0$ (respectively, $\eta_x=\pm$) for each $x\in\mathbb{Z}^3\setminus\Lambda$; they correspond to the free (all plus and all minus) boundary conditions. By the GKS inequalities \cite{Gri67,KS68}, as $\Lambda\uparrow\mathbb{T} \times \mathbb{Z}$, $P_{\Lambda,\beta,h}$ converges weakly to an infinite-volume measure, which we denote by $P_{\beta, h}$. When $\beta\in[0,\beta_c(\mathbb{T} \times \mathbb{Z}))$ and $h\in\mathbb{R}$, the exponential decay result of  \cite{ABF87,DCT16,DCRT19} at $h=0$ and monotonicity of truncated two-point functions in $h$ \cite{GHS70, DSS23} imply that there is a unique infinite-volume limit regardless of the boundary conditions (i.e., both $P_{\Lambda,\beta,h}^+$ and $P_{\Lambda,\beta,h}^-$ converge to the same limit). For a fixed configuration $\sigma\in\{-1,+1\}^{\mathbb{Z}^3}$, we say that there is an infinite $+$cluster if the subgraph induced in $\mathbb{T}\times\mathbb{Z}$ by $\{\sigma\in\{-1,+1\}^{\mathbb{Z}^3}:\sigma_x=+1\}$ contains a connected component with infinitely many vertices; the definition of an infinite $-$cluster is similar. Our main result is
\begin{theorem}\label{thm1}
	Consider the Ising model defined on $\mathbb{T} \times \mathbb{Z}$ with inverse temperature $\beta$ and  external field $h$. For each $\beta\in[0,\beta_c(\mathbb{T} \times \mathbb{Z}))$, there exists $h_c(\beta)>0$ such that whenever $|h|<h_c(\beta)$, there exist both a unique infinite $+$cluster and a unique infinite $-$cluster $P_{\beta,h}$ almost surely.
\end{theorem}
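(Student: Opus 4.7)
The strategy is to prove coexistence in three stages: (i) a baseline at $(\beta,h)=(0,0)$, where $P_{0,0}$ is i.i.d.\ Bernoulli($1/2$); (ii) a transfer to all $\beta\in(0,\beta_c(\mathbb{T}\times\mathbb{Z}))$ at $h=0$ via a finite-size criterion and renormalization; and (iii) an extension to $|h|<h_c(\beta)$ by continuity of local events. Uniqueness of each infinite cluster then comes from the Burton-Keane argument, applied to the translation-invariant $P_{\beta,h}$ with the finite-energy property of Ising Gibbs specifications.

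For stage (i), $P_{0,0}$ is Bernoulli($1/2$) site percolation on $\mathbb{T}\times\mathbb{Z}$. Since the two-dimensional triangular lattice has $p_c^{\mathrm{site}}(\mathbb{T})=1/2$ (self-matching), a standard essential-enhancement argument (e.g.\ via RSW on slices combined with positive-probability $\mathbb{Z}$-linking of crossings) gives $p_c^{\mathrm{site}}(\mathbb{T}\times\mathbb{Z})<1/2$. Hence at density $1/2$ both $+$ and $-$ sites form (unique) infinite clusters.

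For stage (ii), fix $\beta\in(0,\beta_c(\mathbb{T}\times\mathbb{Z}))$ and $h=0$, and define the local event $G_N$: inside the cube $\Lambda_N=[0,N)^3$, the $+$-spin set contains paths crossing $\Lambda_N$ in all three coordinate directions, arranged so that $G_N$ holding on neighbouring blocks $y,y'\in N\mathbb{Z}^3$ forces a $+$-path from $y$ to $y'$. The crucial estimate is
\begin{equation*}
\lim_{N\to\infty} P_{\beta,0}(G_N)=1.
\end{equation*}
The slice marginal of $P_{\beta,0}$ on $\mathbb{T}\times\{k\}$ inherits positive association (FKG), the spin-flip symmetry (so the $+$-density is $1/2$), and the full automorphism group of $\mathbb{T}$; together with $p_c^{\mathrm{site}}(\mathbb{T})=1/2$, these properties should yield an RSW-type bound on $+$-crossings of rectangles in each slice, uniformly in the scale. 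The exponential decay of truncated correlations furnished by the sharpness results \cite{ABF87,DCT16,DCRT19} then decouples distant slices, and the stage (i) combination of slice crossings via $\mathbb{Z}$-edges transposes to the Ising setting and produces $P_{\beta,0}(G_N)\to 1$. The finite-range-dependent block field $(G_{N,y})_{y\in N\mathbb{Z}^3}$ is then promoted, via the Liggett-Schonmann-Stacey stochastic domination theorem, to a Bernoulli($q$) site percolation on $N\mathbb{Z}^3\cong\mathbb{Z}^3$ with $q$ arbitrarily close to $1$; choosing $q>p_c^{\mathrm{site}}(\mathbb{Z}^3)$ yields an infinite $+$ cluster, and the spin-flip symmetry of $P_{\beta,0}$ gives an infinite $-$ cluster.

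For stage (iii), $G_N$ is local and, in the uniqueness regime $\beta<\beta_c(\mathbb{T}\times\mathbb{Z})$, $P_{\beta,h}$ depends continuously on $h$ on local events (as recorded in the introduction). Hence there exists $h_c(\beta,N)>0$ with $P_{\beta,h}(G_N)>1-2\varepsilon$ whenever $|h|<h_c(\beta,N)$. Fixing $N$ from stage (ii) and setting $h_c(\beta):=h_c(\beta,N)$, the same renormalization yields infinite $+$ and $-$ clusters throughout $|h|<h_c(\beta)$. The main obstacle is the crossing estimate $P_{\beta,0}(G_N)\to 1$ in stage (ii): the exponential decay from sharpness controls only \emph{inter}-region dependence, whereas $G_N$ is an \emph{intra}-region event, and the triangular structure must be used essentially to close this gap. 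I expect the delicate part to be the uniform control as $\beta\uparrow\beta_c(\mathbb{T}\times\mathbb{Z})$, where the correlation length diverges and the slice-RSW input must be carefully balanced against the inter-slice decoupling.
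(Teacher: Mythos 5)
There is a genuine gap, and it sits exactly where you flag it at the end: the claim $P_{\beta,0}(G_N)\to 1$ in stage (ii) is not a technical loose end but the entire content of the theorem, and none of the soft ingredients you list can deliver it. At $h=0$ and $\beta<\beta_c(\mathbb{T}\times\mathbb{Z})$ a single layer $\mathbb{T}\times\{k\}$ is \emph{exactly critical} for $+$-percolation: by the self-matching property of $\mathbb{T}$ and spin-flip symmetry, left-right $+$-crossings of self-dual shapes have probability exactly $1/2$, and (as recalled in the introduction, citing \cite{GS09,BCM10}) there is no infinite cluster of either sign in a 2D triangular-lattice layer. An RSW theory for the slice marginal (FKG, symmetry, density $1/2$) therefore only gives crossing probabilities bounded away from $0$ \emph{and away from} $1$, uniformly in the scale --- a criticality statement. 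Your renormalization needs $P_{\beta,0}(G_N)\to 1$, i.e.\ strict supercriticality of the thickness-two slab, and gluing order-one crossings of decoupled critical slices cannot produce probabilities tending to $1$: the third dimension must be shown to \emph{strictly} help, uniformly as $\beta\uparrow\beta_c(\mathbb{T}\times\mathbb{Z})$ where the correlation length diverges. That quantitative enhancement is precisely what Sections \ref{sec:com} and \ref{sec:diff} of the paper are built to prove: one introduces the density $p$ of usable sites in the second layer, and establishes the differential inequality $\partial_h\mu_{\Lambda,p,h}(A)\le\gamma(p,h)\,\partial_p\mu_{\Lambda,p,h}(A)$ (Proposition \ref{prop:diff}), uniformly in $\Lambda$ and in the crossing scale $n$, via the ratio-mixing and stochastic-comparison estimates of Propositions \ref{prop:ratio}--\ref{prop:cov}. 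Integrating along a line from $(p,h)=(1/2,0)$ to $(1,-\delta\sin\theta)$ shows slab crossings survive a small \emph{negative} field, and the sharpness of the transition in $h$ (Proposition \ref{prop:sharp}) then gives $h_c(\beta)<0$ and an infinite $+$cluster for $|h|$ small; no block renormalization or Liggett--Schonmann--Stacey step is needed.

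Your stages (i) and (iii) are fine as far as they go ($p_c^{\mathrm{site}}(\mathbb{T}\times\mathbb{Z})<1/2$ does follow from an essential-enhancement argument at $\beta=0$, and continuity in $h$ of local events in the uniqueness regime is unproblematic), and the baseline ``crossing probability $=1/2$ at the critical layer'' is the same starting point the paper uses. But as written, stage (ii) assumes the conclusion. To repair it you would need either the paper's $p$--$h$ differential inequality or some other uniform strict-enhancement mechanism (an Aizenman--Grimmett-type argument made uniform up to $\beta_c$), together with a route from ``crossings do not decay'' to an infinite cluster --- for which the sharpness result in $h$ is the natural tool.
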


\begin{remark}
	In the proof, we will see that we may define $h_c(\beta)$ in such a way that there is no infinite $+$cluster if $h<-h_c(\beta)$ and there is no infinite $-$cluster if $h>h_c(\beta)$ $P_{\beta,h}$ almost surely. We believe but cannot prove at the moment that there is no infinite $+$cluster at $-h_c(\beta)$ and no  infinite $-$cluster at $h_c(\beta)$.
\end{remark}

\begin{remark}
	Our proof strategy of Theorem \ref{thm1} applies to more general graphs, like $\mathbb{T} \times G$ where $G$ is a graph that contains more than one edge. In particular, Theorem \ref{thm1} holds for $\mathbb{T} \times K_2$ where $K_2$ is the complete graph with two vertices and also for the three-dimensional triangular lattice.
\end{remark}

\begin{remark}
	As mentioned above, Theorem \ref{thm1} is also expected to hold for the cubic lattice $\mathbb{Z}^3$. The arguments in Sections \ref{sec:com} and \ref{sec:diff} work well for the cubic lattice. The missing ingredient is that the $+$ connection (and thus also $-$ connection) in some slab $\mathbb{Z}^2\times\{0,\dots,k\}$ is at least critical at $\beta\in[0,\beta_c(\mathbb{Z}^3))$ and $h=0$.
\end{remark}

\begin{remark}
	On $\mathbb{Z}^2$, Higuchi's coexistence result \cite{Hig82,Hig93} is somewhat artificial in the sense that the extra bonds (i.e., the diagonals of $\mathbb{Z}^2$) have no Ising interactions. In the context of the current paper, a natural question is what happens for the Ising model on  $\mathbb{Z}^2$ with both nearest-neighbor and diagonal interactions (we denote this lattice by  $\mathbb{Z}^2_*$, which is the matching graph of the usual nearest-neighbor square lattice). From \cite{Hig82,Hig93}, it is easy to show that there is coexistence of infinite $+$ and $-$clusters for the Ising model defined on $\mathbb{Z}^2_*$ when $\beta$ is small. Then it is natural to conjecture that the same coexistence result holds for all $\beta\in[0,\beta_c(\mathbb{Z}^2_*))$. Our method (see a summary in the next paragraph) should be applicable to this case: $\mathbb{Z}^2_*$ can be viewed as $\mathbb{T}$ plus diagonal edges with slope $-1$ (see Figure \ref{fig:T}) and each of such edges is selected with probability $p$ for an increasing event.
\end{remark}
The main idea of the proof of Theorem \ref{thm1} is by introducing a new parameter, $p$, which is the probability that a given vertex in $\mathbb{Z}^3$ can be used for an increasing event $A$. So the probability of $A$, which we denote by $\mu_{\Lambda,p,h}(A)$, is a function of $p$ and $h$ (we always fix $\beta<\beta_c(\mathbb{T} \times \mathbb{Z})$). Most of the paper is devoted to proving that the ratio of the partial derivatives of $\mu_{\Lambda,p,h}(A)$ with respect to $h$ and $p$ is bounded from above by a continuous and positive function and this bound is uniform in $\Lambda$. A principal novelty of the current paper is to bound the influence from one fixed spin on the increasing event $A$ by adding an extra exponentially small (in radius) external field on the sphere center at this point (see Proposition \ref{prop:sd} below), and then use this to bound the difference between $h$ dependence at the point and $p$ dependence nearby (i.e., pivotality) by an exponentially small fraction of the sum of $h$ dependence nearby (see Lemma \ref{lem:covtopiv} below).

The method of differential inequalities has been used to prove strict monotonicity for critical points in percolation and Ising models \cite{AG91,BBR14}, strict inequality for critical values of random cluster and Potts models \cite{BGK93}, equalities between critical exponents and Lipschitz continuity of critical lines \cite{Gri95}, inequality between the critical points of site and bond percolation \cite{CS00}, the location and strict monotonicity of the Kert\'{e}sz line \cite{CJN18,HK23}.

The organization of the paper is as follows. In Section \ref{sec:com}, we prove a stochastic inequality which relates the influence on increasing events from one fixed spin and the influence of an external field imposed a certain distance away, and then quantify the latter influence.  In Section \ref{sec:diff}, we prove an inequality on the partial derivatives $\partial \mu_{\Lambda,p,h}(A)/\partial h$ and $\partial \mu_{\Lambda,p,h}(A)/\partial p$. In Section \ref{sec:pf}, we prove our main result Theorem \ref{thm1}. In the \hyperref[appn]{Appendix}, we prove a pointwise reversed inequality on $h$ and $p$ dependence.

\section{Some comparison and correlation inequalities}\label{sec:com}
In this section, we prove some comparison and correlation inequalities which will be useful later. 

For $x,y\in\mathbb{Z}^3$, we use $|x-y|$ to denote the Euclidean distance between $x$ and $y$. Let $\mathbb{N}$ be the set of all positive integers. For $n\in\mathbb{N}$, let $B_n=[-n,n]^3\cap (\mathbb{T}\times \mathbb{Z})$ be the box with side-length $2n$ centered at the origin. For $x\in\mathbb{Z}^3$, let $B_n(x):=x+B_n$ be the translation of $B_n$ by $x$. For $\Lambda \subset \mathbb{Z}^3$,  let $|\Lambda|:=\text{number of vertices in }\Lambda$, $\Lambda^c:=\mathbb{Z}^3\setminus \Lambda$ and
\[\partial \Lambda:=\{x\in\Lambda: \exists y\in \Lambda^c \text{ such that } xy \text{ is an edge in }\mathbb{T}\times \mathbb{Z}\};\]
let $\mathcal{F}_{\Lambda}$ denote the $\sigma$-algebra generated by all events which are determined on $\Lambda$.

For $A\in \mathcal{F}_{\Lambda}$, we write $I_A$ for the indicator function of $A$. We sometimes write $\langle \cdot \rangle_{\Lambda,\beta,\mathbf{h}}$ for the expectation with respect to $P_{\Lambda,\beta,\mathbf{h}}$, and $\langle X; Y \rangle_{\Lambda,\beta,\mathbf{h}}$ for the corresponding covariance between two random variables $X$ and $Y$.

We first recall a ratio mixing property for high temperature Ising models.
\begin{proposition}[\cite{Ale04,DSS23}]\label{prop:ratio}
	For each $\beta\in[0,\beta_c(\mathbb{T}\times \mathbb{Z}))$, there exist constants $K=K(\beta)\in(0,\infty)$ and $\lambda=\lambda(\beta)\in(0,\infty)$ such that for each finite $\Lambda \subset \mathbb{Z}^3$ and all $\Lambda_1, \Lambda_2\subset \Lambda$ with $\Lambda_1\cap\Lambda_2=\emptyset$, we have
	\[\left|\frac{P_{\Lambda,\beta,h}(A \cap B)}{P_{\Lambda,\beta,h}(A)P_{\Lambda,\beta,h}(B)}-1\right|\leq K\sum_{x \in \Lambda_1, y\in \Lambda_2}e^{-\lambda|x-y|},~\forall A\in\mathcal{F}_{\Lambda_1}, B\in\mathcal{F}_{\Lambda_2},\]
	whenever the right hand side (RHS) of the inequality is less than $1$.
\end{proposition}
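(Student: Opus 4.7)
The plan is to derive Proposition \ref{prop:ratio} from exponential decay of the truncated two-point function, using the Edwards--Sokal random-cluster representation and a cluster-decoupling argument in the spirit of \cite{Ale04}.

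First, I would establish uniform exponential decay: for $\beta\in[0,\beta_c(\mathbb{T}\times\mathbb{Z}))$, there exist $C,\lambda_0>0$ such that
\[|\langle\sigma_x;\sigma_y\rangle_{\Lambda,\beta,h}|\le Ce^{-\lambda_0|x-y|}\]
uniformly in the finite volume $\Lambda$ and in $h\in\mathbb{R}$. At $h=0$ the sharpness theorems \cite{ABF87,DCT16,DCRT19} together with GKS monotonicity (which dominates the finite-volume two-point function by its infinite-volume counterpart) give exponential decay of $\langle\sigma_x\sigma_y\rangle_{\Lambda,\beta,0}$, which coincides with the truncated correlation by spin-flip symmetry. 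The extension to $h\ne 0$ uses the monotonicity of $|h|\mapsto|\langle\sigma_x;\sigma_y\rangle_{\Lambda,\beta,h}|$, a consequence of the GHS inequality \cite{GHS70} and its refinement in \cite{DSS23}, so the $h=0$ bound dominates.

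Second, introduce the Edwards--Sokal random-cluster measure $\phi_{\Lambda,\beta,h}$ on the graph obtained from $\Lambda$ by adjoining a ghost vertex $g$ joined to every $x\in\Lambda$ with coupling $|h|$. In the associated coupling, spins are recovered by assigning the ghost cluster the sign of $h$ and attaching independent uniform $\pm 1$ labels to every other cluster. A short computation expresses $\langle\sigma_x;\sigma_y\rangle_{\Lambda,\beta,h}$ as a sum of two non-negative terms, the first being the probability that $x$ and $y$ lie in a common cluster avoiding $g$. Hence step one yields
\[\phi_{\Lambda,\beta,h}\bigl(x\leftrightarrow y\text{ in a cluster avoiding }g\bigr)\le Ce^{-\lambda_0|x-y|}.\]
Now let $\mathcal{E}$ be the $\omega$-event that no non-ghost cluster meets both $\Lambda_1$ and $\Lambda_2$. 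On $\mathcal{E}$, conditionally on $\omega$ the restrictions $\sigma|_{\Lambda_1}$ and $\sigma|_{\Lambda_2}$ are independent, so $P_{\Lambda,\beta,h}(A\cap B\mid\omega)=P_{\Lambda,\beta,h}(A\mid\omega)P_{\Lambda,\beta,h}(B\mid\omega)$ for $A\in\mathcal{F}_{\Lambda_1}$ and $B\in\mathcal{F}_{\Lambda_2}$, and a union bound gives $\phi_{\Lambda,\beta,h}(\mathcal{E}^c)\le K\sum_{x\in\Lambda_1,\,y\in\Lambda_2}e^{-\lambda|x-y|}$.

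Third, write
\[P(A\cap B)-P(A)P(B)=\mathrm{Cov}_\phi\bigl(P(A\mid\omega),P(B\mid\omega)\bigr)+E_\phi\bigl[P(A\cap B\mid\omega)-P(A\mid\omega)P(B\mid\omega)\bigr];\]
the second term is supported on $\mathcal{E}^c$ by the conditional-independence identity, while the first is controlled by decomposing $\omega$ along the cluster partition into an $\mathcal{F}_{\Lambda_1}$-relevant part, an $\mathcal{F}_{\Lambda_2}$-relevant part, and a remainder, and then using FKG for $\phi_{\Lambda,\beta,h}$ together with the exponential bound on the non-ghost connection. This yields bounds on both terms proportional to $\phi(\mathcal{E}^c)\cdot P(A)P(B)$, giving the ratio form claimed.

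The main obstacle is precisely this last step, i.e.\ upgrading an additive decoupling to a multiplicative one proportional to $P(A)P(B)$, uniformly in $A$ and $B$ (and in particular when $P(A)$ or $P(B)$ is small). The covariance term is not automatically of this form, and controlling it requires the FKG property of the random-cluster measure together with the fact that dependence across $\Lambda_1$--$\Lambda_2$ is carried only by clusters of non-trivial diameter, which are exponentially rare; this is where Alexander's argument does the real work, and the hypothesis that the right-hand side of the proposition be less than $1$ is exactly the regime in which the conversion yields explicit constants.
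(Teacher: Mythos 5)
There is a genuine gap, and you have in fact named it yourself: the final step, upgrading the additive decoupling estimate to the multiplicative (ratio) form with $P_{\Lambda,\beta,h}(A)P_{\Lambda,\beta,h}(B)$ in the denominator, is the entire content of Proposition \ref{prop:ratio}, and your sketch does not supply it. Your first two steps are sound and would yield \emph{additive} weak mixing: the Edwards--Sokal decomposition, the conditional independence of $\sigma|_{\Lambda_1}$ and $\sigma|_{\Lambda_2}$ on the event $\mathcal{E}$, and the union bound on $\phi(\mathcal{E}^c)$ via exponential decay of non-ghost connectivities together give $|P(A\cap B)-P(A)P(B)|\leq C\phi(\mathcal{E}^c)$ plus the covariance term. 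But the ratio form is what the paper actually uses: in the proof of Proposition \ref{prop:sd} it is applied with $B=\{\tau_{x_1}=\tilde{\tau}_1,\dots,\tau_{x_k}=\tilde{\tau}_k\}$, a cylinder event whose probability is exponentially small in $k$, so an additive bound is worthless there. Your assertion that FKG for $\phi_{\Lambda,\beta,h}$ ``yields bounds on both terms proportional to $\phi(\mathcal{E}^c)\cdot P(A)P(B)$'' is precisely the theorem to be proved, not a routine consequence of the decomposition. Note also that the proposition is stated for \emph{arbitrary} $A\in\mathcal{F}_{\Lambda_1}$, $B\in\mathcal{F}_{\Lambda_2}$ (and is applied to non-monotone cylinder events), so an FKG argument on $\mathrm{Cov}_\phi(P(A\mid\omega),P(B\mid\omega))$ cannot be invoked directly; Alexander's actual argument proceeds quite differently, by a careful surgery on FK clusters crossing between $\Lambda_1$ and $\Lambda_2$ that produces the multiplicative error uniformly over all events.

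For comparison, the paper does not reprove this statement at all: it derives it by citing Theorem 3.10 and Remark 3.11 of \cite{Ale04} (whose proof is carried out in Section 5 of \cite{Ale98}), which convert a \emph{strong spatial mixing} hypothesis into ratio weak mixing, and then verifies that hypothesis for $P_{\Lambda,\beta,h}$ with $\beta<\beta_c(\mathbb{T}\times\mathbb{Z})$ via Corollary 1.8 of \cite{DSS23} (which also gives a decay rate $\lambda$ depending only on $\beta$, not on $h$ --- a uniformity your step one obtains by a different but compatible route, namely GHS monotonicity in $|h|$). So the acceptable fixes are either to cite Alexander's theorem as the paper does, or to actually reproduce the cluster-surgery argument of \cite{Ale98}; as written, your proof stops exactly where the difficulty begins.
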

\begin{proof}
	This follows from Theorem 3.10 and Remark 3.11 of \cite{Ale04} (the actual proof is in Section 5 of \cite{Ale98}) and the strong spatial mixing property for $P_{\Lambda,\beta,h}$ whenever $\beta\in [0,\beta_c(\mathbb{T}\times \mathbb{Z}))$ and $h\in\mathbb{R}$ from Corollary 1.8 of \cite{DSS23}. We remark that as in \cite{DSS23}, we may choose the decay rate $\lambda$ to depend only on $\beta$ but not on $h$.
\end{proof}

We next use the ratio mixing property to prove the following stochastic comparison inequality.
\begin{proposition}\label{prop:sd}
	For each $\beta\in[0,\beta_c(\mathbb{T}\times \mathbb{Z}))$ and $h\in \mathbb{R}$, there exists $a=a(\beta)\in(0,\infty)$ and  $M=M(\beta,h)\in(0,\infty)$ such that for each finite $\Lambda\subset \mathbb{Z}^3$, we have that for any $x\in \Lambda$, any $m\geq M$ and any increasing event $A\in\mathcal{F}_{\Lambda\setminus B_m(x)}$,
	\[P_{\Lambda,\beta,\mathbf{g}^+}(A)\geq P_{\Lambda,\beta,h}(A | \sigma_x=+1), P_{\Lambda,\beta,\mathbf{g}^-}(A)\leq P_{\Lambda,\beta,h}(A | \sigma_x=-1),\]
	where
	\[g_y^{\pm}:=\begin{cases}h \pm e^{-am}, &y\in \partial B_m(x)\cap\Lambda,\\
		h, &y\in \Lambda \setminus \partial B_m(x).
	\end{cases}\]
\end{proposition}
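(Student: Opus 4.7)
My plan is to prove both inequalities via a Markov-property reduction to a stochastic domination on the surface $S := \partial B_m(x)\cap\Lambda$, which I then establish by comparing explicit Radon--Nikodym densities using Proposition \ref{prop:ratio}. I will focus on the ``$+$'' case; the ``$-$'' case is symmetric.

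Setting $\tilde\Lambda := \Lambda\setminus B_m(x)$, note that $S$ separates $\{x\}$ from $\tilde\Lambda$ inside $\Lambda$: any path from $x$ to $\tilde\Lambda$ must exit $B_m(x)$, hence pass through a vertex of $\partial B_m(x)\cap\Lambda = S$. By the domain Markov property of the Ising model, the conditional law of $\sigma_{\tilde\Lambda}$ given $\sigma_S=\tau$ coincides under $P_{\Lambda,\beta,\mathbf{g}^+}$, $P_{\Lambda,\beta,h}$, and $P_{\Lambda,\beta,h}(\cdot\mid \sigma_x=+1)$, because all three assign the same field $h$ on $\tilde\Lambda$ and the conditioning at $x$ is screened off by $\sigma_S$. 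Denoting this common conditional probability of $A$ by $\phi(\tau)$, which is increasing in $\tau$ whenever $A$ is increasing, it suffices to prove the stochastic domination $\nu^+\succeq_{st}\mu^+$ between the marginals on $\sigma_S$ of $P_{\Lambda,\beta,\mathbf{g}^+}$ and $P_{\Lambda,\beta,h}(\cdot\mid\sigma_x=+1)$, respectively.

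Next, letting $\nu^0$ denote the marginal of $\sigma_S$ under $P_{\Lambda,\beta,h}$, a direct computation yields
\[
\frac{d\nu^+}{d\nu^0}(\tau) = \frac{e^{e^{-am}\sum_{y\in S}\tau_y}}{\langle e^{e^{-am}\sum_{y\in S}\sigma_y}\rangle_{\Lambda,\beta,h}},
\qquad
\frac{d\mu^+}{d\nu^0}(\tau) = \frac{P_{\Lambda,\beta,h}(\sigma_x=+1\mid \sigma_S=\tau)}{P_{\Lambda,\beta,h}(\sigma_x=+1)}.
\]
Applying Proposition \ref{prop:ratio} with $\Lambda_1=\{x\}$ and $\Lambda_2=S$, the second density lies in $[1-\epsilon_m, 1+\epsilon_m]$ with $\epsilon_m \le K\sum_{y\in S}e^{-\lambda|x-y|} \le C m^2 e^{-\lambda m}$, for $m$ large enough to make this quantity less than $1$. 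Choosing $a:=\lambda(\beta)/2$, I would then verify that $g:=d\nu^+/d\mu^+$ is increasing on $\{-1,+1\}^S$ by a single-spin-flip computation: for $\tau'\ge \tau$ differing at one site $y_0\in S$ (flipping $-1$ to $+1$),
\[
\frac{g(\tau')}{g(\tau)} = e^{2e^{-am}}\cdot\frac{P_{\Lambda,\beta,h}(\sigma_x=+1\mid\sigma_S=\tau)}{P_{\Lambda,\beta,h}(\sigma_x=+1\mid\sigma_S=\tau')} \ge e^{2e^{-am}}\cdot\frac{1-\epsilon_m}{1+\epsilon_m} \ge 1,
\]
valid as soon as $e^{-am}\ge \tfrac{3}{2}\epsilon_m$, which holds for all $m\ge M(\beta,h)$ since $a<\lambda$.

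Finally, since $\mu^+$ inherits positive correlations of increasing functions from the FKG Ising measure $P_{\Lambda,\beta,h}(\cdot\mid\sigma_x=+1)$, and $g$ is increasing with $\langle g\rangle_{\mu^+}=1$, for each increasing $B\subset\{-1,+1\}^S$ I get $\nu^+(B) = \langle g\,I_B\rangle_{\mu^+} \ge \langle g\rangle_{\mu^+}\langle I_B\rangle_{\mu^+} = \mu^+(B)$; this gives $\nu^+\succeq_{st}\mu^+$, and combined with the Markov reduction proves the first inequality. For the second inequality, one runs the parallel argument with $g^-:=d\nu^-/d\mu^-$, which turns out to be a decreasing function under the same threshold on $m$; the FKG negative correlation between the decreasing $g^-$ and the increasing $I_B$ then yields the reverse domination $\nu^-\preceq_{st}\mu^-$. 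The principal obstacle is the monotonicity step for $g$: the extra field $e^{-am}$ must outgun the $\epsilon_m$-sized residual influence of conditioning on $\sigma_x=+1$, which forces $a<\lambda(\beta)$ and a threshold $M(\beta,h)$ large enough to absorb both the polynomial prefactor from $|S|\lesssim m^2$ and the (possibly $h$-dependent) constant $K$ from Proposition \ref{prop:ratio}.
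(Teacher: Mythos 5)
Your argument is correct, and it reaches the same intermediate target as the paper's proof — stochastic domination between the marginals on $S=\partial B_m(x)\cap\Lambda$ of the tilted measure and of the measure conditioned on $\sigma_x=\pm1$, after which the spatial Markov property and monotonicity in boundary conditions finish the job exactly as in the paper — but the mechanism you use to establish that domination is genuinely different. The paper runs a sequential, site-by-site coupling on $S$: at each step it lower-bounds the gain from the extra boundary field by $c_1e^{-am}$ via the derivative identity $\partial_t P^{\tilde\sigma[k]}_{\Lambda,\beta,\mathbf{g}(t)}(\sigma_{x_{k+1}}=+1)=\tfrac12\sum_y\langle\sigma_{x_{k+1}};\sigma_y\rangle\geq c_1$ (FKG plus a uniform lower bound on the single-site susceptibility), and upper-bounds the effect of conditioning on $\sigma_x=+1$ by $4K|\partial B_m|e^{-\lambda m}$ via Proposition \ref{prop:ratio}. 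You instead write both marginals as densities against the unconditioned marginal $\nu^0$, check that $d\nu^+/d\mu^+$ is increasing — because the exact multiplicative tilt $e^{2e^{-am}}$ per single spin flip beats the $[1-\epsilon_m,1+\epsilon_m]$ fluctuation of $P_{\Lambda,\beta,h}(\sigma_x=+1\mid\sigma_S=\tau)/P_{\Lambda,\beta,h}(\sigma_x=+1)$ supplied by Proposition \ref{prop:ratio} — and conclude by the standard fact that an increasing, mean-one Radon--Nikodym derivative with respect to a positively associated measure yields stochastic domination (positive association does pass to marginals, as you note). The quantitative competition is identical in both proofs ($e^{-am}$ versus $K|\partial B_m|e^{-\lambda m}$, forcing $a<\lambda$), but your route dispenses with the susceptibility lower bound $c_1$ entirely, since the field's effect enters exactly through the density rather than through a differential estimate; the paper's coupling is more hands-on and avoids the Holley-type bookkeeping. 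Both are complete; just record explicitly that $M$ must also be large enough that the right-hand side of Proposition \ref{prop:ratio} is below $1$, which your choice of threshold already accommodates.
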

\begin{proof}
	We order all vertices in $\partial B_m(x)\cap \Lambda$ as $x_1, x_2, \dots, x_L$ for some $L\in\mathbb{N}$. We construct algorithmically a coupling for spins from both measures by exploring the statuses of $\sigma\overset{d} = P_{\Lambda,\beta,\mathbf{g}^+}$ and $\tau\overset{d}=P_{\Lambda,\beta,h}$ in $\partial B_m(x)\cap\Lambda$ one by one according to the fixed order. The first inequality in the proposition follows from the following claim, the spatial Markov property and monotonicity in boundary conditions for Ising measures; the second inequality is proved similarly.
	\begin{claim} For each integer $k\in [0,L-1]$, any $\tilde{\sigma},\tilde{\tau}\in\{-1,+1\}^{k}$ with $\tilde{\sigma}_i\geq \tilde{\tau}_i$ for each $i$, we have
		\begin{align*}
			&P_{\Lambda,\beta,\mathbf{g}^+}(\sigma_{x_{k+1}}=+1|\sigma_{x_1}=\tilde{\sigma}_1,\dots, \sigma_{x_k}=\tilde{\sigma}_k)\\
			&\qquad \geq P_{\Lambda,\beta,h}(\tau_{x_{k+1}}=+1|\tau_{x_1}=\tilde{\tau}_1,\dots, \tau_{x_k}=\tilde{\tau}_k, \tau_x=+1).
		\end{align*}
	\end{claim}
	\begin{proof}[Proof of Claim]
		Note that 
		\[P_{\Lambda,\beta,\mathbf{g}^+}(\cdot|\sigma_{x_1}=\tilde{\sigma}_1,\dots, \sigma_{x_k}=\tilde{\sigma}_k)=P_{\Lambda,\beta,\mathbf{g}^+}^{\tilde{\sigma}[k]}(\cdot), \text{ with } \tilde{\sigma}[k]:=\{\sigma_{x_1}=\tilde{\sigma}_1,\dots, \sigma_{x_k}=\tilde{\sigma}_k\}.\]
		We define $\mathbf{g}(t)\in\mathbb{R}^{\Lambda}$ by
		\begin{equation}\label{eq:g(t)}
			g_y(t):=\begin{cases}h + t, &y\in \partial B_m(x)\cap\Lambda,\\
				h, &y\in \Lambda \setminus \partial B_m(x).
			\end{cases}
		\end{equation}
		Then there exists a $c_1=c_1(\beta,h)\in(0,\infty)$ such that
		\begin{align*}
			\frac{\partial P_{\Lambda,\beta,\mathbf{g}(t)}^{\tilde{\sigma}[k]}(\sigma_{x_{k+1}}=+1)}{\partial t}&=\sum_{y\in \partial B_m(x)\cap\Lambda} \left. \frac{\partial P_{\Lambda,\beta,\mathbf{g}}^{\tilde{\sigma}[k]}(\sigma_{x_{k+1}}=+1)}{\partial g_y} \right|_{\mathbf{g}=\mathbf{g}(t)}\\
			&=\sum_{y\in \partial B_m(x)\cap \Lambda}\frac{\langle \sigma_{x_{k+1}};\sigma_y\rangle_{\Lambda,\beta,\mathbf{g}(t)}^{\tilde{\sigma}[k]}}{2}\geq \frac{\langle \sigma_{x_{k+1}};\sigma_{x_{k+1}}\rangle_{\Lambda,\beta,\mathbf{g}(t)}^{\tilde{\sigma}[k]}}{2}\geq c_1,
		\end{align*}
		where we have used the FKG inequality in the first inequality.
		
		Therefore,
		\begin{align*}
			P_{\Lambda,\beta,\mathbf{g}^+}^{\tilde{\sigma}[k]}(\sigma_{x_{k+1}}=+1)&=\int_0^{e^{-am}} \frac{\partial P_{\Lambda,\beta,\mathbf{g}(t)}^{\tilde{\sigma}[k]}(\sigma_{x_{k+1}}=+1)}{\partial t} dt+P_{\Lambda,\beta,h}^{\tilde{\sigma}[k]}(\sigma_{x_{k+1}}=+1)\\
			&\geq c_1 e^{-am}+P_{\Lambda,\beta,h}^{\tilde{\sigma}[k]}(\sigma_{x_{k+1}}=+1).
		\end{align*}
		Meanwhile, we also have
		\begin{align*}
			&P_{\Lambda,\beta,h}(\tau_{x_{k+1}}=+1|\tau_{x_1}=\tilde{\tau}_1,\dots, \tau_{x_k}=\tilde{\tau}_k, \tau_x=+1)\\
			&\qquad=\frac{P_{\Lambda,\beta,h}(\tau_{x_{k+1}}=+1, \tau_{x_1}=\tilde{\tau}_1,\dots, \tau_{x_k}=\tilde{\tau}_k | \tau_x=+1)}{P_{\Lambda,\beta,h}(\tau_{x_1}=\tilde{\tau}_1,\dots, \tau_{x_k}=\tilde{\tau}_k | \tau_x=+1)}.
		\end{align*}
		Applying Proposition \ref{prop:ratio} to the last displayed fraction, we get
		\begin{align*}
			&P_{\Lambda,\beta,h}(\tau_{x_{k+1}}=+1|\tau_{x_1}=\tilde{\tau}_1,\dots, \tau_{x_k}=\tilde{\tau}_k, \tau_x=+1)\\
			&\qquad \leq \frac{(1+K|\partial B_m|e^{-\lambda m})P_{\Lambda,\beta,h}(\tau_{x_{k+1}}=+1, \tau_{x_1}=\tilde{\tau}_1,\dots, \tau_{x_k}=\tilde{\tau}_k)}{(1-K|\partial B_m|e^{-\lambda m})P_{\Lambda,\beta,h}( \tau_{x_1}=\tilde{\tau}_1,\dots, \tau_{x_k}=\tilde{\tau}_k)}\\
			&\qquad=\frac{(1+K|\partial B_m|e^{-\lambda m}))}{(1-K|\partial B_m|e^{-\lambda m})}P_{\Lambda,\beta,h}^{\tilde{\tau}[k]}(\tau_{x_{k+1}}=+1)\\
			&\qquad \leq P_{\Lambda,\beta,h}^{\tilde{\tau}[k]}(\tau_{x_{k+1}}=+1)+4K|\partial B_m|e^{-\lambda m}, ~\forall m\geq M_1
		\end{align*}
		if  $M_1>0$ is chosen such that
		\[K|\partial B_m|e^{-\lambda m}<1/2,~\forall m\geq M_1.\]
		We now pick any $a\in(0,\lambda)$ and $M \geq M_1$ such that
		\[c_1 e^{-am} \geq 4K|\partial B_m|e^{-\lambda m},~\forall m\geq M.\]
		Then the claim follows by induction on $k$ (note that the above proof already includes the base case $k=0$).
	\end{proof}
	As mentioned before the claim, this also completes the proof of the proposition.
\end{proof}

Next, we quantify the influence on increasing events from the measure with an extra field imposed on $\partial B_m(x)\cap \Lambda$ by the influence from the measure without.
\begin{proposition}\label{prop:cov}
	For any finite $\Lambda\subset \mathbb{Z}^3$ and $x\in \Lambda$, let $\mathbf{g}(t)$ be defined by \eqref{eq:g(t)} for some $m\in\mathbb{N}$. Then we have that for each increasing event $A\in\mathcal{F}_{\Lambda\setminus B_m(x)}$,
	\begin{align*}
		\exp\left[-4 |\partial B_m| |t|\right] \sum_{y\in \partial B_m(x)\cap \Lambda}\langle \sigma_y; I_A\rangle_{\Lambda,\beta,h} &\leq \sum_{y\in \partial B_m(x)\cap \Lambda}\langle \sigma_y; I_A\rangle_{\Lambda,\beta,\mathbf{g}(t)}\\
		&\qquad \leq \exp\left[4 |\partial B_m| |t|\right] \sum_{y\in \partial B_m(x)\cap \Lambda}\langle \sigma_y; I_A\rangle_{\Lambda,\beta,h}.
	\end{align*}
\end{proposition}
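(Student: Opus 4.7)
The plan is to reduce the two-sided exponential bound to the differential estimate
\[
\left|\frac{d}{dt}\log S(t)\right| \leq 4|\partial B_m|, \qquad S(t):=\langle N;I_A\rangle_{\Lambda,\beta,\mathbf{g}(t)},\ \ N:=\sum_{y\in\partial B_m(x)\cap\Lambda}\sigma_y,
\]
and then integrate over $[0,t]$ in each direction. The FKG inequality (available because the ferromagnetic Ising measure with any external field satisfies the FKG lattice condition, and both $N$ and $I_A$ are increasing in $\sigma$) gives $S(t)\geq 0$, so the logarithm is well-defined; the degenerate case $S(0)=0$ is consistent with both sides of the claimed bound vanishing and can be recovered by a limiting argument.

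First I would compute $S'(t)$ via the chain rule $\partial_t\langle X\rangle_{\mathbf{g}(t)}=\langle X;N\rangle_{\mathbf{g}(t)}$ (which holds for every observable $X$, since $\partial_t g_z(t)=\mathbf{1}_{z\in\partial B_m(x)\cap\Lambda}$). Applying it with $X\in\{NI_A,\,N,\,I_A\}$ in turn and collecting terms yields, with all expectations under $P_{\Lambda,\beta,\mathbf{g}(t)}$,
\[
S'(t)=\langle(N-\langle N\rangle)^2;I_A\rangle=\langle N^2;I_A\rangle-2\langle N\rangle\,S(t).
\]

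The heart of the argument is to bound $|\langle N^2;I_A\rangle|\leq 2|\partial B_m|\,S(t)$ by combining FKG with carefully chosen monotone shifts of $N^2$. Since $|N|\leq|\partial B_m(x)\cap\Lambda|\leq|\partial B_m|$, the polynomial $q(u):=u^2-2|\partial B_m|\,u$ has derivative $q'(u)=2u-2|\partial B_m|\leq 0$ on the interval $[-|\partial B_m|,|\partial B_m|]$, so $q(N(\sigma))$ is a non-increasing function of $\sigma$; as $I_A$ is increasing, FKG forces $\langle q(N);I_A\rangle\leq 0$, which rearranges to $\langle N^2;I_A\rangle\leq 2|\partial B_m|\,S(t)$. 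The same argument applied to $\tilde q(u):=u^2+2|\partial B_m|\,u$, which is non-decreasing on the same interval, gives the reversed bound $\langle N^2;I_A\rangle\geq -2|\partial B_m|\,S(t)$. Combining with the trivial $|\langle N\rangle|\leq|\partial B_m|$ produces $|S'(t)|\leq(2|\partial B_m|+2|\partial B_m|)\,S(t)=4|\partial B_m|\,S(t)$, which is the desired log-derivative bound. I do not anticipate any serious obstacle; the only real insight needed is to notice that although $N^2$ itself is not monotone in $\sigma$, its shifts $N^2\pm 2|\partial B_m|\,N$ are, which is exactly what lets the signed covariance $\langle N^2;I_A\rangle$ be sandwiched between $\pm 2|\partial B_m|$ times the manifestly non-negative quantity $S(t)=\langle N;I_A\rangle$.
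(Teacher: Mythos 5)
Your proof is correct and follows essentially the same route as the paper: compute the $t$-derivative of the covariance sum, bound it by $4|\partial B_m|$ times the sum itself via FKG applied to monotone shifts of the quadratic term, and conclude by Gr\"onwall. Your aggregated observation that $N^2\pm 2|\partial B_m|N$ is monotone is just the summed form of the paper's Lemma that $\sigma_y+\sigma_z\pm\sigma_y\sigma_z$ is increasing, so the two arguments coincide.
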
 
The proof of the proposition uses the following lemma, which is of independent interest.
\begin{lemma}\label{lem:betaandh}
	For any finite $\Lambda\subset \mathbb{Z}^3$ and any $\mathbf{h}\in\mathbb{R}^{\Lambda}$, we have that for any $y, z\in\Lambda$ and any increasing event $A\in\mathcal{F}_{\Lambda}$
	\[\left| \langle \sigma_y\sigma_z; I_A\rangle_{\Lambda,\beta,\mathbf{h}} \right| \leq \langle \sigma_y; I_A\rangle_{\Lambda,\beta,\mathbf{h}} +\langle \sigma_z; I_A\rangle_{\Lambda,\beta,\mathbf{h}}.\]
\end{lemma}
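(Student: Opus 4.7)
The plan is to rewrite $\sigma_y\sigma_z$ in two different ways so that the FKG inequality can be applied directly. The key algebraic identities are
\[
\sigma_y\sigma_z \;=\; (1-\sigma_y)(1-\sigma_z) + \sigma_y + \sigma_z - 1 \;=\; (1+\sigma_y)(1+\sigma_z) - \sigma_y - \sigma_z - 1,
\]
which are trivial to verify by expanding. Since $(1-\sigma_y)(1-\sigma_z) = 4\,I_{\{\sigma_y=-1\}}I_{\{\sigma_z=-1\}}$ is a \emph{decreasing} function of $\sigma$, while $(1+\sigma_y)(1+\sigma_z) = 4\,I_{\{\sigma_y=+1\}}I_{\{\sigma_z=+1\}}$ is \emph{increasing}, each identity is tailor-made for a one-step application of FKG once we take covariances with $I_A$.

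Concretely, I would first take the covariance $\langle\cdot\,;I_A\rangle_{\Lambda,\beta,\mathbf{h}}$ of both sides of the two identities, using bilinearity to obtain
\[
\langle \sigma_y\sigma_z;I_A\rangle_{\Lambda,\beta,\mathbf{h}} = \langle (1-\sigma_y)(1-\sigma_z);I_A\rangle_{\Lambda,\beta,\mathbf{h}} + \langle \sigma_y;I_A\rangle_{\Lambda,\beta,\mathbf{h}} + \langle \sigma_z;I_A\rangle_{\Lambda,\beta,\mathbf{h}},
\]
\[
\langle \sigma_y\sigma_z;I_A\rangle_{\Lambda,\beta,\mathbf{h}} = \langle (1+\sigma_y)(1+\sigma_z);I_A\rangle_{\Lambda,\beta,\mathbf{h}} - \langle \sigma_y;I_A\rangle_{\Lambda,\beta,\mathbf{h}} - \langle \sigma_z;I_A\rangle_{\Lambda,\beta,\mathbf{h}},
\]
(constants drop out because they have zero covariance with anything). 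By FKG for the ferromagnetic Ising model $P_{\Lambda,\beta,\mathbf{h}}$ (which holds for an arbitrary magnetic field vector), the first displayed covariance is $\leq 0$ since $A$ is increasing and $(1-\sigma_y)(1-\sigma_z)$ is decreasing, and the second is $\geq 0$ since $(1+\sigma_y)(1+\sigma_z)$ is increasing.

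Combining, the first identity gives the upper bound
\[
\langle \sigma_y\sigma_z;I_A\rangle_{\Lambda,\beta,\mathbf{h}} \;\leq\; \langle \sigma_y;I_A\rangle_{\Lambda,\beta,\mathbf{h}} + \langle \sigma_z;I_A\rangle_{\Lambda,\beta,\mathbf{h}},
\]
and the second gives the matching lower bound
\[
\langle \sigma_y\sigma_z;I_A\rangle_{\Lambda,\beta,\mathbf{h}} \;\geq\; -\langle \sigma_y;I_A\rangle_{\Lambda,\beta,\mathbf{h}} - \langle \sigma_z;I_A\rangle_{\Lambda,\beta,\mathbf{h}}.
\]
Finally, applying FKG once more to the single-spin covariances $\langle \sigma_y;I_A\rangle_{\Lambda,\beta,\mathbf{h}}$ and $\langle \sigma_z;I_A\rangle_{\Lambda,\beta,\mathbf{h}}$ shows they are nonnegative, so the right-hand side of the desired inequality is legitimately $|\cdot|$-compatible, and the claim follows.

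There is essentially no main obstacle here: the only insight is spotting the two product decompositions $(1\mp\sigma_y)(1\mp\sigma_z)$ that convert the bilinear quantity $\sigma_y\sigma_z$ into a monotone function of the spin configuration, after which FKG does all the work. The proof does not use the structure of $\mathbb{T}\times\mathbb{Z}$, the temperature restriction $\beta<\beta_c$, or the site $x$ at all, which is consistent with the lemma being stated in maximal generality for use inside Proposition~\ref{prop:cov}.
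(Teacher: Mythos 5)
Your proof is correct and is essentially the paper's argument: the paper observes that $\sigma_y+\sigma_z\pm\sigma_y\sigma_z$ are increasing functions and applies FKG, and your decompositions $(1\mp\sigma_y)(1\mp\sigma_z)$ are exactly these functions up to an additive constant (which drops out of the covariance). No substantive difference.
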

\begin{proof}
	This follows from the observation that $\sigma_x+\sigma_y\pm \sigma_x\sigma_y$ are increasing functions and the FKG inequality. We note that the covariance on the left-hand side can in fact be negative. E.g., this occurs if $A:=\{\sigma_y=+1\}$ and $\langle \sigma_y \rangle_{\Lambda,\beta,\mathbf{h}}=\langle \sigma_z \rangle_{\Lambda,\beta,\mathbf{h}} \in(-1,0)$.
\end{proof}
\begin{proof}[Proof of Proposition \ref{prop:cov}]
	Let
	\[f(t):=\sum_{y\in \partial B_m(x)\cap \Lambda}\langle \sigma_y; I_A\rangle_{\Lambda,\beta,\mathbf{g}(t)}.\]
	Then we have 
	\begin{align*}
		\frac{d f(t)}{d t}&=\sum_{y, z\in \partial B_m(x)\cap \Lambda}\langle \sigma_y\sigma_z; I_A\rangle_{\Lambda,\beta,\mathbf{g}(t)}-\langle \sigma_y\rangle_{\Lambda,\beta,\mathbf{g}(t)}\langle \sigma_z; I_A\rangle_{\Lambda,\beta,\mathbf{g}(t)}\\
		&\qquad-\langle \sigma_z\rangle_{\Lambda,\beta,\mathbf{g}(t)}\langle \sigma_y; I_A\rangle_{\Lambda,\beta,\mathbf{g}(t)}.
	\end{align*}
	Lemma \ref{lem:betaandh} and the trivial bound $|\langle \sigma_y\rangle_{\Lambda,\beta,\mathbf{g}(t)}|\leq 1$ imply that
	\[\left| \frac{d f(t)}{d t} \right| \leq \sum_{y, z\in \partial B_m(x)\cap \Lambda}\left[2\langle \sigma_y; I_A\rangle_{\Lambda,\beta,\mathbf{g}(t)}+2\langle \sigma_z; I_A\rangle_{\Lambda,\beta,\mathbf{g}(t)}\right] \leq 4|\partial B_m| f(t).\]
	The proposition follows from Gr\"onwall's inequality.
\end{proof}

\section{A differential inequality}\label{sec:diff}
Let $\Lambda\subset \mathbb{Z}^3$ be finite. For any $A\in\mathcal{F}_{\Lambda}$, $\Delta\subset \Lambda$ and $\sigma\in\{-1,+1\}^{\Lambda}$, we say {\it $\Delta$ is pivotal for $(A,\sigma)$} if $\sigma^{\Delta +}\in A$ and $\sigma^{\Delta -}\notin A$ where
\begin{equation}\label{eq:sigmamod}
	\sigma_x^{\Delta+}:=\begin{cases}
		\sigma_x, & x\in \Lambda\setminus \Delta,\\
		+1,       &  x\in \Delta,
	\end{cases}\qquad
	\sigma_x^{\Delta-}:=\begin{cases}
		\sigma_x, & x\in \Lambda\setminus \Delta,\\
		-1,       &  x\in \Delta.
	\end{cases}
\end{equation}
We define the event
\[\{\Delta \text{ is pivotal for }A\}:=\{\sigma\in\{-1,+1\}^{\Lambda}: \Delta \text{ is pivotal for }(A,\sigma)\}.\]
We also define $\{A \text{ occurs on } \Delta \}$ to be the event 
\[\left\{\sigma\in \{-1,+1\}^{\Lambda}: \forall \tau \in  \{-1,+1\}^{\Lambda} \text{ with }\tau_x=\sigma_x \text{ for each } x\in \Delta \text{ implies } \tau \in A\right\}.\]
Note that for any $A\in\mathcal{F}_{\Lambda}$, we have
\begin{equation}\label{eq:pivcom}
	A \cap \{\Delta \text{ is pivotal for } A\}^c=\{A \text{ occurs on }  \Delta^c\}.
\end{equation}

For $k\in \mathbb{N}\cup \{0\}$, let $\mathbb{S}_k$ be the slab of thickness $k$, namely the induced subgraph of $\mathbb{T}\times \mathbb{Z}$ with the set of vertices $\mathbb{Z}^2\times\{0,\dots,k\}$. For the rest of the section, we always fix $\beta\in[0,\beta_c(\mathbb{T}\times\mathbb{Z}))$ and will drop the $\beta$ dependence. For any finite $\Lambda\subset\mathbb{Z}^3$ and $p\in[0,1]$, we define a new measure $\mu_{\Lambda,p,h}$ in the following way: for each $A\in \mathcal{F}_{\Lambda\cap \mathbb{S}^1}$,
\[\mu_{\Lambda,p,h}(A):=\sum_{\omega\in \{0,1\}^{\Lambda_{(1)}}}p^{|\omega^{-1}(1)|}(1-p)^{|\omega^{-1}(0)|}P_{\Lambda,\beta,h}(A \text{ occurs on } \mathbb{S}_0 \cup \omega^{-1}(1)),\]
where $\Lambda_{(1)}:=\{x\in \Lambda: \text{ the third coordinate of } x, x(3)=1\}$, $\omega^{-1}(1):=\{x\in \Lambda_{(1)}: \omega_x=1\}$ and $\omega^{-1}(0):=\Lambda_{(1)}\setminus \omega^{-1}(1)$. That is, we first run an independent Bernoulli site percolation on $\mathbb{S}_1\setminus \mathbb{S}_0$ to get a configuration $\omega$, and $\mu_{\Lambda,p,h}(A)$ is probability that $A$ occurs on $\mathbb{S}_0\cup \omega^{-1}(1)$.

Our main ingredient to the proof of Theorem \ref{thm1} is the following differential inequality. We define the event $\{\exists~\text{LR} +\text{crossing in } B_n\cap \mathbb{S}_1\}$ to be
\[\{\exists \text{ a path of }+ \text{ spins in }  B_n\cap \mathbb{S}_1 \text{ which intersects both hyperplanes }\{x(1)=\pm n\}\}.\]
\begin{proposition}\label{prop:diff}
	There exists a continuous function $\gamma:(0,1)\times \mathbb{R}\rightarrow(0,\infty)$ such that for any finite $\Lambda\subset\mathbb{Z}^3$ with $B_n\subset \Lambda$ for some $n\in\mathbb{N}$, the increasing event $\mathcal{H}:=\{\exists \text{ LR } + \text{crossing in }B_n\cap \mathbb{S}_1 \}$ satisfies
	\[\frac{\partial \mu_{\Lambda,p,h}(\mathcal{H})}{\partial h}\leq \gamma(p,h) \frac{\partial \mu_{\Lambda,p,h}(\mathcal{H})}{\partial p}.\]
\end{proposition}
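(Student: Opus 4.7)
My plan is to express $\partial_h \mu$ and $\partial_p \mu$ via Russo-type formulas and establish, for each $x \in \Lambda$, a pointwise inequality bounding the $h$-influence $\langle\sigma_x; I_{B_\omega}\rangle$ by a $p$-pivotality contribution at a nearby layer-1 site, modulo an exponentially small remainder that telescopes away after summation.

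Writing $B_\omega := \{A \text{ occurs on } \mathbb{S}_0 \cup \omega^{-1}(1)\}$ and letting $\nu_p$ denote the Bernoulli$(p)$ product measure on $\{0,1\}^{\Lambda_{(1)}}$, direct differentiation of the series defining $\mu_{\Lambda,p,h}$ gives
\[
\partial_h \mu_{\Lambda,p,h}(A) = \mathbb{E}_{\omega\sim\nu_p}\sum_{x\in\Lambda}\langle\sigma_x; I_{B_\omega}\rangle_{\Lambda,\beta,h},
\]
\[
\partial_p \mu_{\Lambda,p,h}(A) = \sum_{y\in\Lambda_{(1)}}\mathbb{E}_{\omega'}\bigl[P_{\Lambda,\beta,h}(B_{\omega'+\delta_y}\setminus B_{\omega'})\bigr],
\]
where $\omega'$ is Bernoulli$(p)$ on $\Lambda_{(1)}\setminus\{y\}$ and $\delta_y$ sets the $y$-coordinate to $1$. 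All summands are nonnegative (by FKG on the left, by monotonicity of $B_\omega$ in $\omega$ on the right), and each $p$-summand equals the probability that $\sigma_y=+1$ and adding $y$ to the set of available sites produces a new left-right crossing.

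Fix $m \ge M(\beta,h)$. For each $x \in \Lambda$ with $B_\omega \in \mathcal{F}_{\Lambda \setminus B_m(x)}$, I combine the identity $\langle\sigma_x; I_{B_\omega}\rangle = 2p_+(1-p_+)[P(B_\omega|\sigma_x=+1) - P(B_\omega|\sigma_x=-1)]$ (with $p_+ := P(\sigma_x=+1)$) with Proposition \ref{prop:sd} to obtain $\langle\sigma_x; I_{B_\omega}\rangle \le \tfrac{1}{2}[P_{\Lambda,\beta,\mathbf{g}^+}(B_\omega) - P_{\Lambda,\beta,\mathbf{g}^-}(B_\omega)]$; writing this difference as $\int_{-e^{-am}}^{e^{-am}} \tfrac{d}{dt} P_{\Lambda,\beta,\mathbf{g}(t)}(B_\omega)\,dt$ and applying Proposition \ref{prop:cov} bounds it by $C\, e^{-am}\sum_{z \in \partial B_m(x) \cap \Lambda}\langle\sigma_z; I_{B_\omega}\rangle$. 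For the remaining $x$—those within distance $m$ of $\mathbb{S}_1 \cap B_n$—the core Lemma \ref{lem:covtopiv} upgrades this to
\[
\langle\sigma_x; I_{B_\omega}\rangle \le C_1\, P_{\Lambda,\beta,h}\bigl(B_{\omega'(x)+\delta_{y(x)}}\setminus B_{\omega'(x)}\bigr) + C_2\, e^{-am}\sum_{z\in\partial B_m(x)\cap\Lambda}\langle\sigma_z; I_{B_\omega}\rangle
\]
for a suitable map $y(\cdot):\Lambda \to \Lambda_{(1)}$ of multiplicity bounded by some $\kappa = \kappa(\beta,h)$ (e.g.\ a column projection onto layer $1$ restricted to the distance-$m$ neighborhood of $\mathbb{S}_1 \cap B_n$), with $\omega'(x)$ denoting the restriction of $\omega$ to $\Lambda_{(1)} \setminus \{y(x)\}$.

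Summing over $x \in \Lambda$, translation invariance gives $\sum_x \sum_{z \in \partial B_m(x) \cap \Lambda}\langle\sigma_z; I_{B_\omega}\rangle \le |\partial B_m|\sum_z \langle\sigma_z; I_{B_\omega}\rangle$, and the multiplicity bound yields $\sum_x \mathbb{E}_\omega P(B_{\omega'(x)+\delta_{y(x)}} \setminus B_{\omega'(x)}) \le \kappa\, \partial_p \mu_{\Lambda,p,h}(A)$. Choosing $m$ so that $C_2\, e^{-am}|\partial B_m| \le 1/2$—possible since $|\partial B_m| \lesssim m^2$ and $a > 0$—lets me absorb the error term into the left-hand side. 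Taking $\mathbb{E}_\omega$ then produces $\partial_h \mu \le 2 C_1 \kappa\, \partial_p \mu$, which is the sought inequality with $\gamma(p,h) := 2 C_1(\beta,h)\kappa$, a continuous positive function.

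The main obstacle is Lemma \ref{lem:covtopiv}: for $x$ within distance $m$ of $\mathbb{S}_1 \cap B_n$, Proposition \ref{prop:sd} cannot be applied with $B_\omega \in \mathcal{F}_{\Lambda\setminus B_m(x)}$, so one must explicitly transfer the conditioning on $\sigma_x$ to a pivotality event at a neighboring layer-1 site $y(x)$ using FKG, finite-energy, and the ratio-mixing of Proposition \ref{prop:ratio}, while simultaneously ensuring that the induced map $y(\cdot)$ has multiplicity uniformly bounded in $\Lambda$ so that the summation step closes.
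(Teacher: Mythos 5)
Your architecture matches the paper's: Russo-type formulas for both derivatives, a decomposition of each covariance into a ``$B_m(x)$ is pivotal'' part (handled by finite energy, producing single-site pivotal probabilities) and an ``occurs off $B_m(x)$'' part (handled by Propositions \ref{prop:sd} and \ref{prop:cov}, producing an $e^{-am}$-small sum of nearby covariances that is absorbed after summation over $x$). The gap is in the key transfer inequality, your stated form of Lemma \ref{lem:covtopiv}: you compare $\langle\sigma_x;I_{B_\omega}\rangle$ against the pivotality of a single site $y(x)$ in the \emph{unmodified} percolation configuration $\omega'(x)=\omega|_{\Lambda_{(1)}\setminus\{y(x)\}}$. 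In $\mathbb{T}\times\mathbb{Z}$ a layer-one vertex $y=(u,1)$ has exactly one neighbour in $\mathbb{S}_0$, namely $(u,0)$, and $(u,0)$ has the same first coordinate as $y$; hence if no layer-one neighbour of $y$ lies in $\omega^{-1}(1)$, no self-avoiding crossing path inside $\mathbb{S}_0\cup\omega'(x)^{-1}(1)\cup\{y\}$ can make essential use of $y$, so $B_{\omega'(x)+\delta_{y(x)}}\setminus B_{\omega'(x)}=\emptyset$. Taking $\omega\equiv 0$ near $x$ therefore kills your main term while $\langle\sigma_x;I_{B_\omega}\rangle$ stays positive for $x\in B_n\cap\mathbb{S}_0$, and the $e^{-am}$ error term cannot rescue a pointwise bound. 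This is precisely why the paper replaces $\omega$ by $\omega^{(y)}$ (all sites of $\Lambda_{(1)}\cap B_{3m}(y)$ forced open) before asking for pivotality, sums over all candidate sites $y\in\Lambda_{(1)}\cap B_{m+1}(x)$, and then pays the finite-energy cost $\bigl(2/\min\{p,1-p\}\bigr)^{|B_{3m}|}$ for the $\omega$-modification when averaging (plus a $1/(1-p)$ from re-indexing the $\omega$-marginal, Lemma \ref{lem:xytoy}).

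The omission is not cosmetic: it is exactly where the $p$-dependence of $\gamma$ must enter. Your final constant $\gamma=2C_1(\beta,h)\kappa(\beta,h)$ is uniform in $p$, and no such bound can hold. Indeed, as $p\to0$ one has $\partial_p\mu_{\Lambda,p,h}(A)\to\sum_{y}P_{\Lambda,\beta,h}\bigl(B_{\delta_y}\setminus B_{\mathbf 0}\bigr)=0$, because by the neighbour count above an isolated layer-one site can never complete a crossing that $\mathbb{S}_0$ alone does not already provide, whereas $\partial_h\mu_{\Lambda,p,h}(A)\to\sum_{x}\langle\sigma_x;I_{\{A\text{ occurs on }\mathbb{S}_0\}}\rangle_{\Lambda,\beta,h}>0$. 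So the ratio of derivatives blows up at $p=0$ (and symmetrically the bookkeeping degenerates at $p=1$), which is why the proposition only asserts a continuous $\gamma$ on the open set $(0,1)\times\mathbb{R}$. Finally, you yourself flag the transfer lemma as ``the main obstacle'' and do not carry it out; since that step is the entire analytic content of the proposition beyond bookkeeping, the proposal is incomplete precisely where the work lies, and the version of the step you do write down is false as stated.
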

\begin{remark}
	Proposition \ref{prop:diff} holds for more general increasing events in $\mathcal{F}_{\Lambda\cap\mathbb{S}_1}\setminus  \mathcal{F}_{\Lambda\cap\mathbb{S}_0}$. For example, it holds for the event $\{\exists \text{ a path of }+ \text{spins in }\mathbb{S}_1 \text{ which connects }0 \text{ to }\partial B_n\}$. But it certainly fails if $A\in \mathcal{F}_{\Lambda\cap\mathbb{S}_0}$ with $A$ nontrivial (i.e., $\mu_{\Lambda,p,h}(A)$ is not a constant) since the RHS of the inequality is always $0$ in this case.
\end{remark}

We first compute both partial derivatives in Proposition \ref{prop:diff}. For $x\in \Lambda_{(1)}$, $\Delta\subset \Lambda_{(1)}$, $\omega\in\{0,1\}^{\Delta}$, we define (the first two equations generalize our previous definition of $\omega^{-1}(1)$ and $\omega^{-1}(0)$)
\begin{align*}
	&\omega^{-1}(1):=\{y\in \Delta: \omega_y=1\}, \omega^{-1}(0):=\Delta\setminus \omega^{-1}(1),\\
	&\mathcal{H}(x,\omega):=\{\mathcal{H} \text{ occurs on } \mathbb{S}_0 \cup \omega^{-1}(1)\cup \{x\}\},\\
	&\{x \text{ is}+\text{pivotal for } \mathcal{H}(x,\omega)\}:=\{x \text{ is pivotal for } \mathcal{H}(x,\omega)\}\cap\{\sigma_x=+1\};
\end{align*} 
we remark that it is possible that $x\in\omega^{-1}(1)$.
Note that for each $\omega\in \{0,1\}^{\Lambda_{(1)}\setminus\{x\}}$, we have
\begin{equation}\label{eq:+piv}
	\begin{split}
		&\{x \text{ is}+\text{pivotal for } \mathcal{H}(x,\omega)\}\\
		=&\{\mathcal{H} \text{ occurs on } \mathbb{S}_0 \cup \omega^{-1}(1)\cup \{x\}\}\setminus \{\mathcal{H} \text{ occurs on } \mathbb{S}_0\cup \omega^{-1}(1)\}. 
	\end{split}
\end{equation}
See Figure \ref{fig:Pivotal} for an illustration.

\begin{figure}
	\begin{center}
		\includegraphics{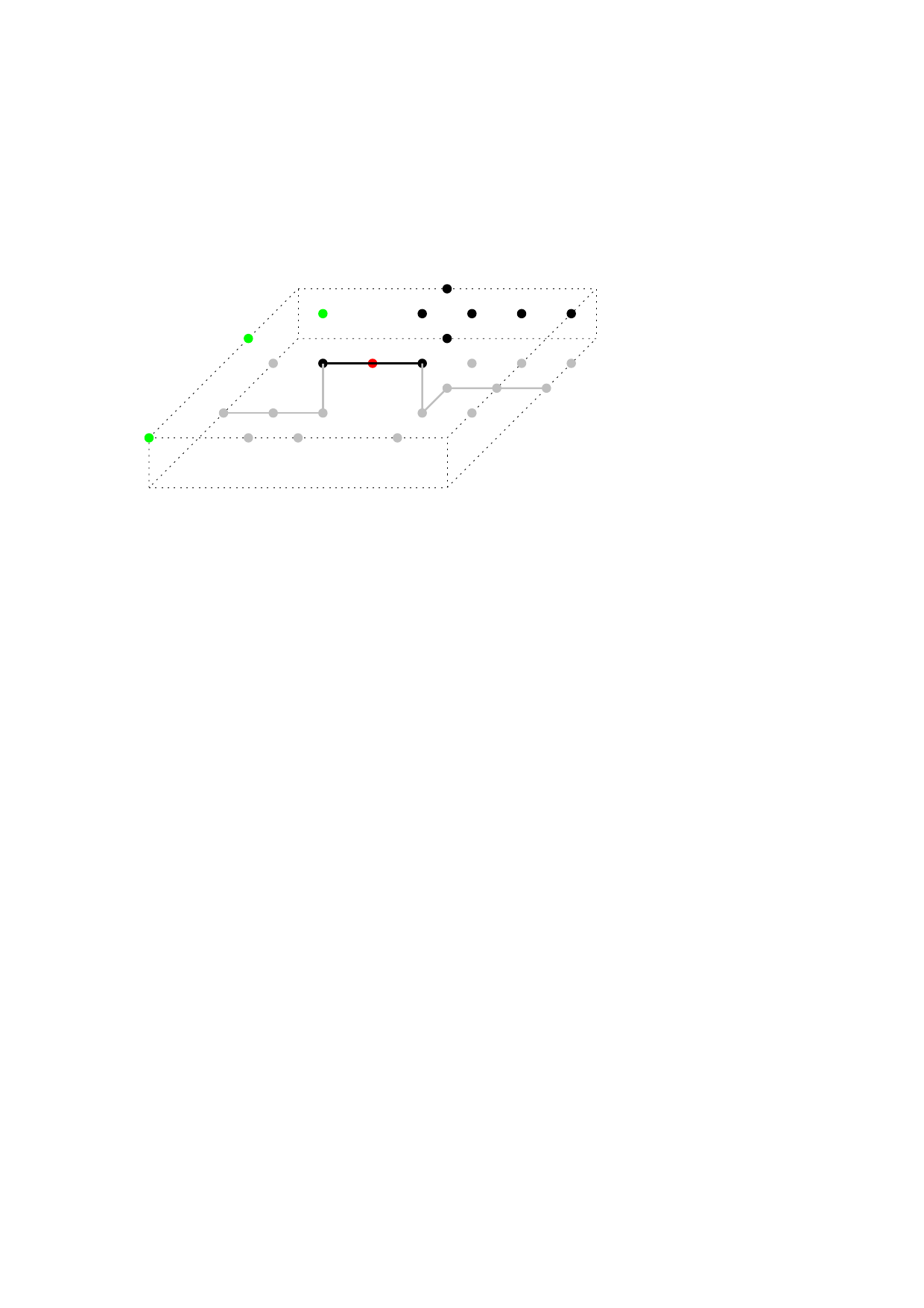}
		\caption{An illustration of the event $\mathcal{H}$ defined in Proposition \ref{prop:diff}. Here $\Lambda=B_{100}$ and $B_n=B_3$; only $B_3\cap\mathbb{S}_1$ is shown (with dotted boundary). A configuration $\sigma\in\{-1,+1\}^{\Lambda}$ is shown; with the set of all plus spins from $\sigma$ consists of all gray, black, red and green points. The gray points are from $\mathbb{S}_0$, and black, red and green points are from $\mathbb{S}_1\setminus\mathbb{S}_0$.  The path made of gray and black line segments crosses $B_3\cap\mathbb{S}_1$ from left to right. Let $x:=(0,0,1)$ be the red point. A configuration $\omega\in \{0,1\}^{\Lambda_{(1)}\setminus\{x\}}$ is shown, with $\omega^{-1}(1)$ consisting of all black points.  So the red and green points take spin value $+1$ but are not included in $\omega^{-1}(1)$. Here $x$ is $+$ pivotal for $(\mathcal{H}(x,\omega),\sigma)$.}\label{fig:Pivotal}
	\end{center}
\end{figure}

\begin{lemma}\label{lem:dirivative}
	Under the assumptions in Proposition \ref{prop:diff}, we have
	\begin{align*}
		\frac{\partial \mu_{\Lambda,p,h}(\mathcal{H})}{\partial p}&=\sum_{x\in \Lambda_{(1)}}\sum_{\omega\in \{0,1\}^{\Lambda_{(1)}\setminus\{x\}}}p^{|\omega^{-1}(1)|}(1-p)^{|\omega^{-1}(0)|} P_{\Lambda,\beta,h}(x \text{ is}+\text{pivotal for } \mathcal{H}(x,\omega)),\\
		\frac{\partial \mu_{\Lambda,p,h}(\mathcal{H})}{\partial h}&=\sum_{x\in \Lambda}\sum_{\omega\in \{0,1\}^{\Lambda_{(1)}}}p^{|\omega^{-1}(1)|}(1-p)^{|\omega^{-1}(0)|}\langle\sigma_x; I_{\{\mathcal{H} \text{ occurs on } \mathbb{S}_0 \cup \omega^{-1}(1)\}}\rangle_{\Lambda,\beta,h}.
	\end{align*}
\end{lemma}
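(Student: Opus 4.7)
The plan is to differentiate termwise in the finite sum defining $\mu_{\Lambda,p,h}(A)$. The key observation is that the Bernoulli weights $p^{|\omega^{-1}(1)|}(1-p)^{|\omega^{-1}(0)|}$ depend only on $p$, while the Ising factor $P_{\Lambda,\beta,h}(A \text{ occurs on } \mathbb{S}_0 \cup \omega^{-1}(1))$ depends only on $h$. Hence the two partial derivatives decouple; I would treat them independently and freely interchange differentiation with the finite sum over $\omega$.

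For the $p$ derivative, I would view the sum over $\omega$ as the expectation of $f(\omega) := P_{\Lambda,\beta,h}(\{A \text{ occurs on } \mathbb{S}_0 \cup \omega^{-1}(1)\})$ under independent Bernoulli$(p)$ site variables on $\Lambda_{(1)}$. A direct computation of $\frac{d}{dp}[p^{k}(1-p)^{n-k}]$ combined with a reindexing that splits $\omega \in \{0,1\}^{\Lambda_{(1)}}$ into its value at $x$ and its restriction $\omega' \in \{0,1\}^{\Lambda_{(1)}\setminus \{x\}}$ (the standard derivation of Russo's formula for a product Bernoulli measure applied to a real-valued observable) yields
\[
\frac{\partial \mu_{\Lambda,p,h}(A)}{\partial p} = \sum_{x \in \Lambda_{(1)}} \sum_{\omega' \in \{0,1\}^{\Lambda_{(1)} \setminus \{x\}}} p^{|\omega'^{-1}(1)|}(1-p)^{|\omega'^{-1}(0)|} D_x(\omega'),
\]
where
\[
D_x(\omega') := P_{\Lambda,\beta,h}(A \text{ occurs on } \mathbb{S}_0 \cup \omega'^{-1}(1) \cup \{x\}) - P_{\Lambda,\beta,h}(A \text{ occurs on } \mathbb{S}_0 \cup \omega'^{-1}(1)).
\]
Since the set-function $\Xi \mapsto \{A \text{ occurs on } \Xi\}$ is monotone under set inclusion (knowing more spin values makes it easier to force the increasing event $A$), the smaller-set event is contained in the larger-set event, so $D_x(\omega')$ equals the $P_{\Lambda,\beta,h}$-probability of the set difference. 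By identity \eqref{eq:+piv} this set difference is precisely $\{x \text{ is } + \text{ pivotal for } A(x,\omega')\}$, which yields the first formula.

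For the $h$ derivative I would interchange the finite $\omega$-sum with the derivative and apply the standard Gibbs-measure identity
\[
\frac{\partial}{\partial h_x} \langle I_B \rangle_{\Lambda,\beta,\mathbf{h}} = \langle \sigma_x; I_B \rangle_{\Lambda,\beta,\mathbf{h}}, \qquad x \in \Lambda,
\]
which follows directly from differentiating the Gibbs weight $\exp\bigl[\sum_{x \in \Lambda} h_x \sigma_x\bigr]$. Since we evaluate at the constant field $h_x \equiv h$, the chain rule gives $\partial/\partial h = \sum_{x \in \Lambda} \partial/\partial h_x$, so
\[
\frac{\partial}{\partial h} P_{\Lambda,\beta,h}(B) = \sum_{x \in \Lambda} \langle \sigma_x; I_B \rangle_{\Lambda,\beta,h}.
\]
Applying this with $B = \{A \text{ occurs on } \mathbb{S}_0 \cup \omega^{-1}(1)\}$ and swapping the two sums gives the second formula.

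The whole lemma is essentially bookkeeping. The only mildly delicate point is matching the discrete $p$-derivative on the Bernoulli side to the $+$ pivotality event on the Ising side through \eqref{eq:+piv}, which is immediate from the monotonicity of $\Xi \mapsto \{A \text{ occurs on } \Xi\}$. I do not anticipate any substantive obstacle.
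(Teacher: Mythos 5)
Your proposal is correct and follows essentially the same route as the paper: the paper introduces vertex-dependent parameters $\mathbf{p},\mathbf{h}$ and applies the chain rule, which is just a repackaging of your direct Russo-type differentiation of the Bernoulli weights and the standard identity $\partial\langle I_B\rangle/\partial h_x=\langle\sigma_x;I_B\rangle$. The identification of the difference of the two occurrence probabilities with $P_{\Lambda,\beta,h}(x\text{ is }+\text{ pivotal for }A(x,\omega))$ via \eqref{eq:+piv} is exactly the paper's final step as well.
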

\begin{proof}
	We may generalize $\mu_{\Lambda,p,h}$ to $\mu_{\Lambda,\mathbf{p},\mathbf{h}}$ for $\mathbf{p}\in[0,1]^{\Lambda_{(1)}}$ and $\mathbf{h}\in\mathbb{R}^{\Lambda}$ in the following way
	\[\mu_{\Lambda,\mathbf{p},\mathbf{h}}(\mathcal{H}):=\sum_{\omega\in \{0,1\}^{\Lambda_{(1)}}}\prod_{y\in\omega^{-1}(1)}p_y \prod_{y\in\omega^{-1}(0)}(1-p_y)P_{\Lambda,\beta,\mathbf{h}}(\mathcal{H} \text{ occurs on } \mathbb{S}_0 \cup \omega^{-1}(1)).\]
	Then it is easy to see that for any $x\in\Lambda_{(1)}$,
	\begin{align*}
		&\frac{\partial\mu_{\Lambda,\mathbf{p},\mathbf{h}}(\mathcal{H})}{\partial p_x}=\sum_{\omega\in \{0,1\}^{\Lambda_{(1)}\setminus\{x\}}}\prod_{y\in\omega^{-1}(1)}p_y \prod_{y\in\omega^{-1}(0)}(1-p_y)\times\\
		&\qquad\big[P_{\Lambda,\beta,\mathbf{h}}(\mathcal{H} \text{ occurs on } \mathbb{S}_0 \cup \omega^{-1}(1)\cup \{x\}) - P_{\Lambda,\beta,\mathbf{h}}(\mathcal{H} \text{ occurs on } \mathbb{S}_0\cup \omega^{-1}(1)) \big],
	\end{align*}
	and for any $x\in\Lambda$,
	\[\frac{\partial\mu_{\Lambda,\mathbf{p},\mathbf{h}}(\mathcal{H})}{\partial h_x}=\sum_{\omega\in \{0,1\}^{\Lambda_{(1)}}}\prod_{y\in\omega^{-1}(1)}p_y \prod_{y\in\omega^{-1}(0)}(1-p_y)\langle\sigma_x; I_{\{\mathcal{H} \text{ occurs on } \mathbb{S}_0 \cup \omega^{-1}(1)\}}\rangle_{\Lambda,\beta,\mathbf{h}}.\]
	The lemma follows from the chain rule and \eqref{eq:+piv}.
\end{proof}

Before we dive into the proof of Proposition \ref{prop:diff}, we first present the overall idea. Lemma~\ref{lem:dirivative} tells us that we need to bound the weighted sum of the covariance $\langle \sigma_x; I_{\mathcal{H}(x,\omega)}\rangle_{\Lambda,\beta,h}$ over $x \in \Lambda$ by the weighted sum of $P_{\Lambda,\beta,h}(x \text{ is}+\text{pivotal for } \mathcal{H}(x,\omega))$ over $x\in \Lambda_{(1)}$; here the change of the event in the covariance is fine due to \eqref{eq:+piv}. In Lemma \ref{lem:covtopiv}, we write $\mathcal{H}(x,\omega)$ as the disjoint union of $\mathcal{H}(x,\omega)\cap \{B_m(x) \text{ is pivotal for } \mathcal{H}(x,\omega)\}$ and $\{\mathcal{H}(x,\omega) \text{ occurs on } B_m(x)^c\}$. Then $|\langle \sigma_x; I_{\mathcal{H}(x,\omega)}I_{\{B_m(x) \text{ is pivotal for } \mathcal{H}(x,\omega)\}}\rangle_{\Lambda,\beta,h}|$ is bounded above by a sum of $P_{\Lambda,\beta,h}(y \text{ is}+\text{pivotal for } \mathcal{H}(y,\omega^{(y)}))$ over $y\in B_{m+1}(x)$ where $\omega^{(y)}$ differs from $\omega$ only in $B_{3m}(y)$; so this term has the desired upper bound. The other term $\langle \sigma_x; I_{\{\mathcal{H}(x,\omega) \text{ occurs on } B_m(x)^c\}}\rangle_{\Lambda,\beta,h}$ is the hard one, which requires the full machinery of Section \ref{sec:com}. With the help of Proposition \ref{prop:sd} and \ref{prop:cov}, we can bound it above by a similar good upper bound plus another sum of $\langle \sigma_y; I_{\mathcal{H}(y,\omega)}\rangle_{\Lambda,\beta,h}$ over $y\in \partial B_m(x)$; the crucial point is that this second sum is multiplied by $e^{-am}$ which could beat any polynomial of $m$. Consequently, when we sum $\langle \sigma_x; I_{\mathcal{H}(x,\omega)}\rangle_{\Lambda,\beta,h}$ over $x \in \Lambda$ (to get $\partial \mu_{\Lambda,p,h}(\mathcal{H}) /\partial h$), we obtain an upper bound which is the sum of  $\partial \mu_{\Lambda,p,h}(\mathcal{H}) / \partial p$ and $e^{-am} \times \text{Poly}(m) \times  \frac{\partial \mu_{\Lambda,p,h}(\mathcal{H})}{\partial h}$. By choosing $m$ large so that $e^{-am} \times \text{Poly}(m)<1/2$, we arrive at the desired inequality in Proposition \ref{prop:diff}. In the above discussion, we somehow ignore the $p$-dependence in those sums, that is why we need Lemma \ref{lem:xytoy} to justify this.

The following lemma establishes the connection between the two partial derivatives in Proposition \ref{prop:diff}.
\begin{lemma}\label{lem:covtopiv}
	Under the assumption of Proposition \ref{prop:diff}, for any $x,y\in \Lambda$, let $\omega^{(y)}\in\{0,1\}^{\Lambda_{(1)}}$ be obtained from $\omega$ by opening all vertices in $\Lambda_{(1)}\cap B_{3m}(y)$:
	\[\omega^{(y)}_z:=\begin{cases}
		1,&z\in\Lambda_{(1)}\cap B_{3m}(y),\\
		\omega_z,&z\in \Lambda_{(1)}\setminus B_{3m}(y).
	\end{cases}\] 
	Then there exists constant $M=M(\beta,h)\in(0,\infty)$ such that for each $m\geq M$,
	\begin{align*}
		\langle \sigma_x; I_{\mathcal{H}(x,\omega)}\rangle_{\Lambda,\beta,h}&\leq C(m,\beta,h)\sum_{y\in\Lambda_{(1)}\cap B_{m+1}(x)}P_{\Lambda,\beta,h}(y \text{ is pivotal for } \mathcal{H}(y,\omega^{(y)}))\\
		&\qquad+8e^{-ma}\sum_{y\in\partial B_m(x)\cap \Lambda}\langle \sigma_y; I_{\mathcal{H}(y,\omega)}\rangle_{\Lambda,\beta,h},
	\end{align*}
	where $C(m,\beta,h)\in(0,\infty)$ and $a$ is defined in Proposition \ref{prop:sd}.
\end{lemma}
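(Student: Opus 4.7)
The plan is to decompose $A(x,\omega)$ into a ``far'' piece lying in $\mathcal{F}_{\Lambda\setminus B_m(x)}$, to which Propositions \ref{prop:sd}--\ref{prop:cov} apply, and a ``local'' remainder supported on configurations that need revealed vertices inside $B_m(x)$ to witness $A$. Concretely, set
\[ F := \bigl\{A\text{ occurs on }(\mathbb{S}_0\cup\omega^{-1}(1))\setminus B_m(x)\bigr\}, \]
which is increasing, lies in $\mathcal{F}_{\Lambda\setminus B_m(x)}$, and satisfies $F\subseteq A(x,\omega)$. Then
\[ \langle\sigma_x; I_{A(x,\omega)}\rangle_{\Lambda,\beta,h} = \langle\sigma_x; I_F\rangle_{\Lambda,\beta,h} + \langle\sigma_x; I_{A(x,\omega)\setminus F}\rangle_{\Lambda,\beta,h}. \]

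For the first term I would apply, in order: (i) the identity $\langle\sigma_x;I_F\rangle = 2P(\sigma_x{=}{+}1)P(\sigma_x{=}{-}1)[P(F|{+}1)-P(F|{-}1)]\leq\tfrac12[P(F|{+}1)-P(F|{-}1)]$ (using $p_+p_-\leq 1/4$); (ii) Proposition \ref{prop:sd} to get $P(F|{+}1)\leq P_{\Lambda,\beta,\mathbf{g}^+}(F)$ and $P(F|{-}1)\geq P_{\Lambda,\beta,\mathbf{g}^-}(F)$; (iii) the fundamental theorem of calculus,
\[ P_{\Lambda,\beta,\mathbf{g}^+}(F)-P_{\Lambda,\beta,\mathbf{g}^-}(F)=\int_{-e^{-am}}^{e^{-am}}\sum_{y\in\partial B_m(x)\cap\Lambda}\langle\sigma_y;I_F\rangle_{\Lambda,\beta,\mathbf{g}(t)}\,dt; \]
(iv) Proposition \ref{prop:cov} with $m\geq M$ large enough that $\exp[4|\partial B_m|e^{-am}]\leq 2$, to bound the integrand pointwise. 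This yields $\langle\sigma_x;I_F\rangle\leq 2e^{-am}\sum_{y\in\partial B_m(x)\cap\Lambda}\langle\sigma_y;I_F\rangle_h$. Since $F\subseteq A(y,\omega)$ for every $y\in\Lambda$, I would next compare $\langle\sigma_y;I_F\rangle$ with $\langle\sigma_y;I_{A(y,\omega)}\rangle$ by conditioning on $\mathcal{G}:=\sigma(\sigma_z:z\in\Lambda\setminus B_m(x))$: the difference $I_{A(y,\omega)\setminus F}$ vanishes on $\{F\text{ holds}\}$, while on $\{F\text{ fails}\}$ FKG applied to the conditional Ising measure inside $B_m(x)$ gives $\mathrm{Cov}(\sigma_y,I_{A(y,\omega)\setminus F}\mid\mathcal{G})\geq 0$; a careful treatment of the cross-term $\mathrm{Cov}(E[\sigma_y\mid\mathcal{G}],E[I_{A(y,\omega)\setminus F}\mid\mathcal{G}])$, absorbing any residual into the pivotality sum or into an extra constant factor, yields the target $\leq 8e^{-am}\sum_{y\in\partial B_m(x)\cap\Lambda}\langle\sigma_y;I_{A(y,\omega)}\rangle_h$.

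For the second term, I would use $\langle\sigma_x;I_{A(x,\omega)\setminus F}\rangle\leq C_1(\beta,h)\,P(A(x,\omega)\setminus F)$ with $C_1^{-1}:=\min\{P(\sigma_x{=}{+}1),P(\sigma_x{=}{-}1)\}$. On $A(x,\omega)\setminus F$, the witness of $A$ needs a revealed vertex inside $B_m(x)$, and a deterministic inspection of the witnessing $+$-crossing shows that some $y\in\Lambda_{(1)}\cap B_{m+1}(x)$ must be pivotal for $A(y,\omega^{(y)})$. The radius-$3m$ opening in $\omega^{(y)}$ is essential here: since $\Lambda_{(1)}\cap B_{3m}(y)\supseteq\Lambda_{(1)}\cap B_m(x)$, the enlarged revealed set of $A(y,\omega^{(y)})$ captures everything $A$ needed inside $B_m(x)$, so the pivotality of $y$ in the enlarged configuration is well defined. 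A union bound over $y$ then produces the pivotal sum with constant $C(m,\beta,h)$.

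The main obstacle is the comparison $\langle\sigma_y;I_F\rangle\lesssim\langle\sigma_y;I_{A(y,\omega)}\rangle$ for $y\in\partial B_m(x)$: covariance is not monotone in the event in general (e.g., enlarging $A$ to the whole sample space kills the covariance), so the argument must exploit the spatial asymmetry that $F\in\mathcal{F}_{\Lambda\setminus B_m(x)}$ while $\sigma_y$ sits inside $B_m(x)$. A secondary delicate point is the combinatorial identification of a pivotal $y\in\Lambda_{(1)}\cap B_{m+1}(x)$ for $A(y,\omega^{(y)})$ starting from the deterministic event $A(x,\omega)\setminus F$, where the radius $3m$ is tuned precisely to make the pivotality transfer go through.
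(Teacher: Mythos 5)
Your architecture is the same as the paper's: the split of $A(x,\omega)$ into $F=\{A(x,\omega)\text{ occurs on }B_m(x)^c\}$ and the local remainder, the chain ``covariance identity $\to$ Proposition \ref{prop:sd} $\to$ fundamental theorem of calculus $\to$ Proposition \ref{prop:cov}'' for the far piece, and the conversion of the local piece into single-site pivotal probabilities. However, your treatment of the local piece contains a genuine gap: you assert that on $A(x,\omega)\setminus F$ a ``deterministic inspection of the witnessing $+$-crossing'' exhibits some $y\in\Lambda_{(1)}\cap B_{m+1}(x)$ that is pivotal for $A(y,\omega^{(y)})$ in the given configuration. This is false as stated: the configuration may contain two disjoint witnesses of $A$, each relying on a different revealed vertex of $B_m(x)$; then $F$ fails and $A(x,\omega)$ holds, yet no single vertex is pivotal. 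The paper's bound \eqref{eq:cov1st} is not deterministic: it first uses $\left|\langle \sigma_x; I_{A(x,\omega)\setminus F}\rangle_{\Lambda,\beta,h}\right|\leq 2P_{\Lambda,\beta,h}(B_m(x)\text{ is pivotal for }A(x,\omega))$ and then performs a local modification of the spins in $B_{m+3}(x)$ to create a genuine single-site bottleneck $y$, paying a finite-energy cost. That cost, not a union bound, is the source of the $\beta,h$-dependent factor $C(m,\beta,h)$; your sketch needs this step made explicit.

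The second soft spot is the comparison $\langle\sigma_y;I_F\rangle_{\Lambda,\beta,h}\leq\langle\sigma_y;I_{A(y,\omega)}\rangle_{\Lambda,\beta,h}+\cdots$ for $y\in\partial B_m(x)$. Your route conditions on $\mathcal{G}$ and applies FKG to the conditional measure, but this only signs the averaged conditional covariance; the cross-term $\mathrm{Cov}\bigl(E[\sigma_y\mid\mathcal{G}],E[I_{A(y,\omega)\setminus F}\mid\mathcal{G}]\bigr)$ has no obvious sign, since $A(y,\omega)\setminus F$ is a difference of increasing events, and ``absorbing any residual into the pivotality sum'' is exactly the part that is not justified. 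The paper sidesteps this entirely: because $y\in\partial B_m(x)\subset B_m(x)$, one has $\{A(y,\omega)\text{ occurs on }B_m(x)^c\}=F$, hence $A(y,\omega)\setminus F\subseteq\{B_m(x)\text{ is pivotal for }A(y,\omega)\}$, and the crude bound $\left|\langle\sigma_y;I_{A(y,\omega)\setminus F}\rangle_{\Lambda,\beta,h}\right|\leq 2P_{\Lambda,\beta,h}(B_m(x)\text{ is pivotal for }A(y,\omega))$ feeds into the same finite-energy conversion to single-site pivotal probabilities. Replacing your FKG step by this containment closes the argument; with these two repairs your proof coincides with the paper's.
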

\begin{remark}
	The reader may wonder whether the local modifications for $\omega$ are necessary. We will comment on this after the inequality \eqref{eq:cov1st}.
\end{remark}
\begin{proof}
	Using \eqref{eq:pivcom}, we can write
	\begin{equation}\label{eq:covdecom}
		\begin{split}
			\langle \sigma_x; I_{\mathcal{H}(x,\omega)}\rangle_{\Lambda,\beta,h}&=\langle \sigma_x; I_{\mathcal{H}(x,\omega)}I_{\{B_m(x) \text{ is pivotal for } \mathcal{H}(x,\omega)\}}\rangle_{\Lambda,\beta,h}\\
			&\qquad+\langle \sigma_x; I_{\mathcal{H}(x,\omega)}I_{\{\mathcal{H}(x,\omega) \text{ occurs on } B_m(x)^c\}}\rangle_{\Lambda,\beta,h}.
		\end{split}
	\end{equation}
	The first term on the RHS of \eqref{eq:covdecom} satisfies that
	\begin{equation}\label{eq:cov1st}
		\begin{split}
			&\left|\langle \sigma_x; I_{\mathcal{H}(x,\omega)}I_{\{B_m(x) \text{ is pivotal for } \mathcal{H}(x,\omega)\}}\rangle_{\Lambda,\beta,h}\right|\leq 2P_{\Lambda,\beta,h}(B_m(x) \text{ is pivotal for } \mathcal{H}(x,\omega))\\
			&\qquad\leq C(m,\beta,h)\sum_{y\in\Lambda_{(1)}\cap B_{m+1}(x)}P_{\Lambda,\beta,h}(y \text{ is pivotal for } \mathcal{H}(y,\omega^{(y)})),
		\end{split}
	\end{equation}
	where we have used the fact that $\{B_m(x) \text{ is pivotal for } \mathcal{H}(x,\omega)\}=\emptyset$ whenever $\Lambda_{(1)}\cap B_{m+1}(x)=\emptyset$ (the latter implies that $B_m(x)\cap \Lambda \cap \mathbb{S}_1=\emptyset$); we have made some local modifications for the spin configuration in $B_{m+3}(x)$ to create a pivotal point $y$ and the cost for such modifications is just a function of $m,\beta,h$ by the finite energy property for Ising measures.  Here the local modifications for $\omega$ within $B_{3m}(y)\supset B_m(x)$ are necessary since we need to make sure that we are free to use any vertices in $\Lambda_{(1)}\cap B_{m+3}(x)$ to construct a pivotal point $y$.
	
	Before we move to the second term on the RHS of \eqref{eq:covdecom}, we note that for a general event $A\in\mathcal{F}_{\Lambda}$, we have
	\begin{equation*}
		\begin{split}
			&\langle \sigma_x;I_A\rangle_{\Lambda,\beta,h}\\
			=&2P_{\Lambda,\beta,h}(\sigma_x=+1)P_{\Lambda,\beta,h}(\sigma_x=-1)\left[P_{\Lambda,\beta,h}(A|\sigma_x=+1)-P_{\Lambda,\beta,h}(A|\sigma_x=-1)\right].
		\end{split}
	\end{equation*}
	This, Proposition \ref{prop:sd} and the mean value theorem imply that
	\begin{equation}\label{eq:cov2nd1}
		\begin{split}
			&\langle \sigma_x; I_{\mathcal{H}(x,\omega)}I_{\{\mathcal{H}(x,\omega) \text{ occurs on } B_m(x)^c\}}\rangle_{\Lambda,\beta,h}=\langle \sigma_x; I_{\{\mathcal{H}(x,\omega) \text{ occurs on } B_m(x)^c\}}\rangle_{\Lambda,\beta,h}\\
			&\qquad\leq 4 e^{-am}\sup_{|t|\leq e^{-am}}\frac{\partial P_{\Lambda,\beta,\mathbf{g}(t)}(\mathcal{H}(x,\omega) \text{ occurs on } B_m(x)^c)}{\partial t},~\forall m\geq M,
		\end{split}
	\end{equation}
	where $\mathbf{g}(t)$ is defined in \eqref{eq:g(t)}.
	Now, Proposition \ref{prop:cov} implies that for any $t$ with $|t|\leq e^{-am}$, we have
	\begin{equation}\label{eq:cov2nd2}
		\begin{split}
			&\frac{\partial P_{\Lambda,\beta,\mathbf{g}(t)}(\mathcal{H}(x,\omega) \text{ occurs on } B_m(x)^c)}{\partial t}=\sum_{y\in \partial B_m(x)\cap \Lambda}\langle \sigma_y; I_{\{\mathcal{H}(x,\omega) \text{ occurs on } B_m(x)^c\}}\rangle_{\Lambda,\beta,\mathbf{g}(t)}\\
			&\qquad \leq \exp\left[4 |\partial B_m| e^{-am}\right] \sum_{y\in \partial B_m(x)\cap \Lambda}\langle \sigma_y; I_{\{\mathcal{H}(x,\omega) \text{ occurs on } B_m(x)^c\}}\rangle_{\Lambda,\beta,h}\\
			&\qquad \leq 2 \sum_{y\in \partial B_m(x)\cap \Lambda}\langle \sigma_y; I_{\{\mathcal{H}(x,\omega) \text{ occurs on } B_m(x)^c\}}\rangle_{\Lambda,\beta,h}, ~ \forall m\geq M,
		\end{split}
	\end{equation}
	where in the last inequality we have made the $M$ from Proposition \ref{prop:sd} larger if necessary such that $\exp\left[4 |\partial B_m| e^{-am}\right] \leq 2$ for each $m\geq M$.
	
	Note that for each $y\in \partial B_m(x)\cap \Lambda$, we have
	\begin{align*}
		&\{\mathcal{H}(x,\omega) \text{ occurs on } B_m(x)^c\}\subset \mathcal{H}(y,\omega),\\
		&\mathcal{H}(y,\omega)\setminus \{\mathcal{H}(x,\omega) \text{ occurs on } B_m(x)^c\}\subset\left\{B_m(x) \text{ is pivotal for } \mathcal{H}(y,\omega)\right\}.
	\end{align*}
	Therefore, we get
	\begin{align*}
		&\langle \sigma_y; I_{\{\mathcal{H}(x,\omega) \text{ occurs on } B_m(x)^c\}}\rangle_{\Lambda,\beta,h}\\
		=&\langle \sigma_y; I_{\mathcal{H}(y,\omega)}\rangle_{\Lambda,\beta,h}-\langle \sigma_y; I_{\{\mathcal{H}(y,\omega)\setminus \{\mathcal{H}(x,\omega) \text{ occurs on } B_m(x)^c\}\}}\rangle_{\Lambda,\beta,h}\\
		\leq&\langle\sigma_y; I_{\mathcal{H}(y,\omega)}\rangle_{\Lambda,\beta,h}+2P_{\Lambda,\beta,h}(B_m(x) \text{ is pivotal for } \mathcal{H}(y,\omega))\\
		\leq &\langle\sigma_y; I_{\mathcal{H}(y,\omega)}\rangle_{\Lambda,\beta,h}+C(m,\beta,h)\sum_{z\in\Lambda_{(1)}\cap B_{m+1}(x)}P_{\Lambda,\beta,h}(z \text{ is pivotal for } \mathcal{H}(z,\omega^{(z)})),
	\end{align*}
	where the last inequality follows as in \eqref{eq:cov1st}. Plugging this into \eqref{eq:cov2nd2} and then into \eqref{eq:cov2nd1}, we obtain that
	\begin{align*}
		&\langle \sigma_x; I_{\mathcal{H}(x,\omega)}I_{\{\mathcal{H}(x,\omega) \text{ occurs on } B_m(x)^c\}}\rangle_{\Lambda,\beta,h}\leq 8e^{-am}\sum_{y\in \partial B_m(x)\cap \Lambda}\langle\sigma_y; I_{\mathcal{H}(y,\omega)}\rangle_{\Lambda,\beta,h}\\
		&\qquad+8e^{-am}C(m,\beta,h)|\partial B_m|\sum_{z\in\Lambda_{(1)}\cap B_{m+1}(x)}P_{\Lambda,\beta,h}(z \text{ is pivotal for } \mathcal{H}(z,\omega^{(z)})).
	\end{align*}
	Combining with \eqref{eq:cov1st} and \eqref{eq:covdecom}, this completes the proof of the lemma.
\end{proof}

We need one more ingredient to prove Proposition \ref{prop:diff}.
\begin{lemma}\label{lem:xytoy}
	Under the assumption of Proposition \ref{prop:diff}, for any $x\in\Lambda$ we have
	\begin{align}
		\begin{split}\label{eq:sumc1}
			&\sum_{\omega\in \{0,1\}^{\Lambda_{(1)}\setminus\{x\}}}p^{|\omega^{-1}(1)|}(1-p)^{|\omega^{-1}(0)|} \langle\sigma_y; I_{\mathcal{H}(y,\omega)}\rangle_{\Lambda,\beta,h}\\
			\leq&\frac{1}{1-p}\sum_{\omega\in \{0,1\}^{\Lambda_{(1)}\setminus\{y\}}}p^{|\omega^{-1}(1)|}(1-p)^{|\omega^{-1}(0)|} \langle\sigma_y; I_{\mathcal{H}(y,\omega)}\rangle_{\Lambda,\beta,h},~\forall y\in\Lambda,
		\end{split}\\
		\begin{split}\label{eq:sumc2}
			&\sum_{\omega\in \{0,1\}^{\Lambda_{(1)}\setminus\{x\}}}p^{|\omega^{-1}(1)|}(1-p)^{|\omega^{-1}(0)|} P_{\Lambda,\beta,h}(y \text{ is pivotal for } \mathcal{H}(y,\omega^{(y)}))\\
			\leq& \frac{1}{1-p}\sum_{\omega\in \{0,1\}^{\Lambda_{(1)}\setminus\{y\}}}p^{|\omega^{-1}(1)|}(1-p)^{|\omega^{-1}(0)|} P_{\Lambda,\beta,h}(y \text{ is pivotal for } \mathcal{H}(y,\omega^{(y)})),~\forall y\in\Lambda_{(1)}.
		\end{split}
	\end{align}
\end{lemma}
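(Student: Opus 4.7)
The plan is to prove both inequalities by the same marginalization-and-drop argument: interpret each sum as a product Bernoulli expectation, expose the single coordinate that differs between the two sides, and discard one contribution using nonnegativity of the integrand.

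Two observations drive the argument. First, the integrand on both sides is independent of $\omega_y$: in \eqref{eq:sumc1} the event $A(y,\omega)=\{A\text{ occurs on }\mathbb{S}_0\cup\omega^{-1}(1)\cup\{y\}\}$ has $y$ adjoined explicitly, so $\omega_y$ is irrelevant, and in \eqref{eq:sumc2} the map $\omega\mapsto\omega^{(y)}$ forces $\omega^{(y)}_y=1$ since $y\in B_{3m}(y)$. Second, the integrand is nonnegative: in \eqref{eq:sumc1} this is $\langle\sigma_y;I_{A(y,\omega)}\rangle_{\Lambda,\beta,h}\geq 0$ by FKG (both $\sigma_y$ and $I_{A(y,\omega)}$ are nondecreasing in $\sigma$), and in \eqref{eq:sumc2} it is a probability.

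Concentrating on the main case $x\neq y$ with $x,y\in\Lambda_{(1)}$, I would decompose any $\omega$ on $\Lambda_{(1)}$ minus one point as a triple $(\omega_x,\omega_y,\omega')$ with $\omega'\in\{0,1\}^{\Lambda_{(1)}\setminus\{x,y\}}$, factor the Bernoulli weight accordingly, and denote by $F(\omega')$ the common value of the integrand viewed as a function of $\omega'^{-1}(1)$. Summing out the missing coordinate on the LHS ($\omega_y$) collapses the sum to $\sum_{\omega'}p^{|\omega'^{-1}(1)|}(1-p)^{|\omega'^{-1}(0)|}F(\omega')$, by the first observation. Summing out the missing coordinate on the RHS ($\omega_x$) gives
\[\tfrac{1}{1-p}\sum_{\omega'}p^{|\omega'^{-1}(1)|}(1-p)^{|\omega'^{-1}(0)|}\bigl[(1-p)F(\omega')+p\,F(\omega'\cup\{x\})\bigr],\]
whose first piece reproduces the LHS exactly, and whose second piece, nonnegative by the second observation, can simply be dropped.

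The remaining edge cases ($x=y$, $x\notin\Lambda_{(1)}$, or $y\notin\Lambda_{(1)}$ for \eqref{eq:sumc1}) are either immediate—the two sums coincide and the factor $\tfrac{1}{1-p}\geq 1$ handles the comparison—or follow from the same one-sided marginalization applied to the single non-matching coordinate. The only subtle point to verify, rather than a genuine obstacle, is that the independence on $\omega_y$ still holds in \eqref{eq:sumc2} when $x\notin B_{3m}(y)$, so that $\omega^{(y)}$ genuinely differs from $\omega$ in the $x$-coordinate; this is immediate from the definition of $\omega^{(y)}$ and causes no trouble.
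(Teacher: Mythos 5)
Your proof is correct and takes essentially the same route as the paper's: the paper phrases the argument as a case analysis with explicit one-to-one (or two-to-one) correspondences between configurations on the two index sets, which is precisely your marginalization over the differing coordinates followed by dropping the nonnegative $\omega_x=1$ contribution. The two facts you isolate---independence of the integrand from $\omega_y$ and its nonnegativity (FKG for \eqref{eq:sumc1}, probability for \eqref{eq:sumc2})---are exactly what the paper's correspondences rely on.
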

\begin{proof}
	For the first inequality, there are four cases.
	\begin{itemize}
		\item $x\notin \Lambda_{(1)}, y\notin\Lambda_{(1)}$. The sums from both sides of \eqref{eq:sumc1} are actually equal.
		\item $x\in \Lambda_{(1)}, y\notin\Lambda_{(1)}$. The one to one correspondence between $\omega\in \{0,1\}^{\Lambda_{(1)}\setminus\{x\}}$ from the left hand side (LHS) of \eqref{eq:sumc1} and $\omega\in \{0,1\}^{\Lambda_{(1)}\setminus\{y\}}$ with $\omega_x=0$ from the RHS of \eqref{eq:sumc1} while keeping all other $\omega_z$ unchanged satisfies that the ratio of of the corresponding terms is exactly $1/(1-p)$.
		\item $x\in \Lambda_{(1)}, y\in\Lambda_{(1)}$ and $y\neq x$. The one to one correspondence between $\omega\in \{0,1\}^{\Lambda_{(1)}\setminus\{x\}}$ with $\omega_y=0$ from the LHS of \eqref{eq:sumc1} and $\omega\in \{0,1\}^{\Lambda_{(1)}\setminus\{y\}}$ with $\omega_x=0$ from the RHS of \eqref{eq:sumc1}  while keeping all other $\omega_z$ unchanged satisfies that the ratio of the corresponding terms is always $1$. The one to one correspondence between $\omega\in \{0,1\}^{\Lambda_{(1)}\setminus\{x\}}$ with $\omega_y=1$ from the LHS of \eqref{eq:sumc1} and $\omega\in \{0,1\}^{\Lambda_{(1)}\setminus\{y\}}$ with $\omega_x=0$ from the RHS of \eqref{eq:sumc1} while keeping all other $\omega_z$ unchanged satisfies that the ratio of of the corresponding terms is always $\frac{p}{1-p}$. 
		\item $x\notin \Lambda_{(1)}, y\in\Lambda_{(1)}$. The sum on the LHS of \eqref{eq:sumc1} with the restriction $\omega_y=0$ equals that $(1-p)$ times the sum on the RHS of \eqref{eq:sumc1}. The sum of the LHS of \eqref{eq:sumc1} with the restriction $\omega_y=1$ equals that $p$ times the sum on the RHS of \eqref{eq:sumc1} 
	\end{itemize}
	This completes the proof of \eqref{eq:sumc1}. The proof of \eqref{eq:sumc2} is similar and simpler.
\end{proof}
We are ready to prove Proposition \ref{prop:diff}.
\begin{proof}[Proof of Proposition \ref{prop:diff}]
	By Lemma \ref{lem:dirivative}, we have
	\begin{align*}
		\frac{\partial\mu_{\Lambda,p,h}(\mathcal{H})}{\partial h}&=\sum_{x\in \Lambda}\sum_{\omega\in \{0,1\}^{\Lambda_{(1)}}}p^{|\omega^{-1}(1)|}(1-p)^{|\omega^{-1}(0)|}\langle\sigma_x; I_{\{\mathcal{H} \text{ occurs on } \mathbb{S}_0 \cup \omega^{-1}(1)\}}\rangle_{\Lambda,\beta,h}\\
		&=\sum_{x\in \Lambda}\sum_{\omega\in \{0,1\}^{\Lambda_{(1)}\setminus\{x\}}}p^{|\omega^{-1}(1)|}(1-p)^{|\omega^{-1}(0)|} \big[p\langle\sigma_x; I_{\mathcal{H}(x,\omega)}\rangle_{\Lambda,\beta,h}\\
		&\qquad +(1-p)\langle\sigma_x; I_{\{\mathcal{H} \text{ occurs on } \mathbb{S}_0 \cup \omega^{-1}(1)\}}\rangle_{\Lambda,\beta,h}\big].
	\end{align*}
	Note that
	\begin{align*}
		&p\langle\sigma_x; I_{\mathcal{H}(x,\omega)}\rangle_{\Lambda,\beta,h} +(1-p)\langle\sigma_x; I_{\{\mathcal{H} \text{ occurs on } \mathbb{S}_0 \cup \omega^{-1}(1)\}}\rangle_{\Lambda,\beta,h}\\
		=&\langle\sigma_x; I_{\mathcal{H}(x,\omega)}\rangle_{\Lambda,\beta,h}-(1-p)\left[\langle\sigma_x; I_{\mathcal{H}(x,\omega)}\rangle_{\Lambda,\beta,h}-\langle\sigma_x; I_{\{\mathcal{H} \text{ occurs on } \mathbb{S}_0 \cup \omega^{-1}(1)\}}\rangle_{\Lambda,\beta,h}\right],
	\end{align*}
	where by \eqref{eq:+piv}, we have
	\begin{equation*}
		\left|\langle\sigma_x; I_{\mathcal{H}(x,\omega)}\rangle_{\Lambda,\beta,h}-\langle\sigma_x; I_{\{\mathcal{H} \text{ occurs on } \mathbb{S}_0 \cup \omega^{-1}(1)\}}\rangle_{\Lambda,\beta,h}\right|\leq 2 P_{\Lambda,\beta,h}(x \text{ is}+\text{pivotal for } \mathcal{H}(x,\omega)).
	\end{equation*}
	Note that the LHS of the last displayed inequality is $0$ if $x\notin\Lambda_{(1)}$. Also note that the event $\{x \text{ is pivotal for } \mathcal{H}(x,\omega)\}$ does not depend on  the status of $\sigma_x$; so there exist constants $c_1=c_1(\beta,h), C_1=C_1(\beta,h)\in(0,\infty)$ such that
	\[c_1\leq\frac{P_{\Lambda,\beta,h}(\sigma_x=+1|x \text{ is pivotal for } \mathcal{H}(x,\omega))}{P_{\Lambda,\beta,h}(\sigma_x=-1|x \text{ is pivotal for } \mathcal{H}(x,\omega))}\leq C_1.\]
	Therefore, Proposition \ref{prop:diff} follows if we can prove that there exists a continuous function $\gamma$ such that
	\begin{equation}\label{eq:covtopiv}
		\begin{split}
			&\sum_{x\in \Lambda}\sum_{\omega\in \{0,1\}^{\Lambda_{(1)}\setminus\{x\}}}p^{|\omega^{-1}(1)|}(1-p)^{|\omega^{-1}(0)|} \langle\sigma_x; I_{\mathcal{H}(x,\omega)}\rangle_{\Lambda,\beta,h}\\
			\leq& \gamma(\beta,h)\sum_{x\in \Lambda_{(1)}}\sum_{\omega\in \{0,1\}^{\Lambda_{(1)}\setminus\{x\}}}p^{|\omega^{-1}(1)|}(1-p)^{|\omega^{-1}(0)|} P_{\Lambda,\beta,h}(x \text{ is pivotal for } \mathcal{H}(x,\omega)).
		\end{split}
	\end{equation}
	Let 
	\[\mathbb{P}_{p}(\omega):=p^{|\omega^{-1}(1)|}(1-p)^{|\omega^{-1}(0)|},~\forall \omega\in \{0,1\}^{\Lambda_{(1)}\setminus\{x\}}.\]
	Then Lemma \ref{lem:covtopiv} implies that for each $m\geq M$, we have
	\begin{equation}\label{eq:sumcov1}
		\begin{split}
			&\sum_{x\in \Lambda}\sum_{\omega\in \{0,1\}^{\Lambda_{(1)}\setminus\{x\}}}p^{|\omega^{-1}(1)|}(1-p)^{|\omega^{-1}(0)|} \langle\sigma_x; I_{\mathcal{H}(x,\omega)}\rangle_{\Lambda,\beta,h}\\
			\leq&C(m,\beta,h)\sum_{x\in \Lambda}\sum_{\omega\in \{0,1\}^{\Lambda_{(1)}\setminus\{x\}}}\mathbb{P}_{p}(\omega)\sum_{y\in\Lambda_{(1)}\cap B_{m+1}(x)}P_{\Lambda,\beta,h}(y \text{ is pivotal for } \mathcal{H}(y,\omega^{(y)}))\\
			&\qquad +8e^{-ma}\sum_{x\in \Lambda}\sum_{\omega\in \{0,1\}^{\Lambda_{(1)}\setminus\{x\}}}\mathbb{P}_{p}(\omega)\sum_{y\in\partial B_m(x)\cap \Lambda}\langle \sigma_y; I_{\mathcal{H}(y,\omega)}\rangle_{\Lambda,\beta,h}.
		\end{split}
	\end{equation}
	Applying Lemma \ref{lem:xytoy}, we obtain that the LHS of \eqref{eq:sumcov1}
	\begin{equation}\label{eq:sumcov2}
		\begin{split}
			\leq&\frac{C(m,\beta,h)}{1-p}\sum_{x\in \Lambda}\sum_{y\in\Lambda_{(1)}\cap B_{m+1}(x)}\sum_{\omega\in \{0,1\}^{\Lambda_{(1)}\setminus\{y\}}}\mathbb{P}_{p}(\omega)P_{\Lambda,\beta,h}(y \text{ is pivotal for } \mathcal{H}(y,\omega^{(y)}))\\
			&\qquad +\frac{8e^{-ma}}{1-p}\sum_{x\in \Lambda}\sum_{y\in\partial B_m(x)\cap \Lambda}\sum_{\omega\in \{0,1\}^{\Lambda_{(1)}\setminus\{y\}}}\mathbb{P}_{p}(\omega)\langle \sigma_y; I_{\mathcal{H}(y,\omega)}\rangle_{\Lambda,\beta,h}\\
			\leq&\frac{C(m,\beta,h)}{1-p}|B_{m+1}|\sum_{y\in\Lambda_{(1)}}\sum_{\omega\in \{0,1\}^{\Lambda_{(1)}\setminus\{y\}}}\mathbb{P}_{p}(\omega)P_{\Lambda,\beta,h}(y \text{ is pivotal for } \mathcal{H}(y,\omega^{(y)}))\\
			&\qquad +\frac{8e^{-ma}}{1-p}|\partial B_m|\sum_{y\in\Lambda}\sum_{\omega\in \{0,1\}^{\Lambda_{(1)}\setminus\{y\}}}\mathbb{P}_{p}(\omega)\langle \sigma_y; I_{\mathcal{H}(y,\omega)}\rangle_{\Lambda,\beta,h}.
		\end{split}
	\end{equation}
	Choose $N=N(\beta,p) \geq M$ such that
	\[\frac{8e^{-ma}}{1-p}|\partial B_m|<\frac{1}{2},~\forall m\geq N.\]
	Then for each $m\geq N$,  \eqref{eq:sumcov1} and \eqref{eq:sumcov2} give that
	\begin{align*}
		&\sum_{x\in \Lambda}\sum_{\omega\in \{0,1\}^{\Lambda_{(1)}\setminus\{x\}}}\mathbb{P}_{p}(\omega) \langle\sigma_x; I_{\mathcal{H}(x,\omega)}\rangle_{\Lambda,\beta,h}\\ \leq&\frac{C(m,\beta,h)}{1-p}|B_{m+1}|\sum_{y\in\Lambda_{(1)}}\sum_{\omega\in \{0,1\}^{\Lambda_{(1)}\setminus\{y\}}}\mathbb{P}_{p}(\omega)P_{\Lambda,\beta,h}(y \text{ is pivotal for } \mathcal{H}(y,\omega^{(y)}))\\
		&\quad +\frac{1}{2}\sum_{y\in\Lambda}\sum_{\omega\in \{0,1\}^{\Lambda_{(1)}\setminus\{y\}}}\mathbb{P}_{p}(\omega)\langle \sigma_y; I_{\mathcal{H}(y,\omega)}\rangle_{\Lambda,\beta,h}
	\end{align*}
	where the second double sum on the RHS of the inequality is the same as the first double sum on the LHS. So we obtain
	that for each $m\geq N$, we have
	\begin{align*}
		&\sum_{x\in \Lambda}\sum_{\omega\in \{0,1\}^{\Lambda_{(1)}\setminus\{x\}}}\mathbb{P}_{p}(\omega) \langle\sigma_x; I_{\mathcal{H}(x,\omega)}\rangle_{\Lambda,\beta,h}\\
		\leq&\frac{2C(m,\beta,h)}{1-p}|B_{m+1}|\sum_{y\in\Lambda_{(1)}}\sum_{\omega\in \{0,1\}^{\Lambda_{(1)}\setminus\{y\}}}\mathbb{P}_{p}(\omega)P_{\Lambda,\beta,h}(y \text{ is pivotal for } \mathcal{H}(y,\omega^{(y)}))\\
		\leq &\frac{2C(m,\beta,h)}{1-p}|B_{m+1}|\sum_{y\in\Lambda_{(1)}}\frac{2^{|B_{3m}|}}{(\min\{p,1-p\})^{|B_{3m}|}}\\
		&\qquad\times\sum_{\omega\in \{0,1\}^{\Lambda_{(1)}\setminus\{y\}}}\mathbb{P}_{p}(\omega)P_{\Lambda,\beta,h}(y \text{ is pivotal for } \mathcal{H}(y,\omega)),
	\end{align*}
	where the last inequality follows because of the finite energy property for independent percolation and that  the map from $\omega$ to $\omega^{(y)}$ is at most $|B_{3m}|$ to $1$. This completes the proof of \eqref{eq:covtopiv} and thus the proposition.
\end{proof}

\section{Proof of the main result}\label{sec:pf}
In this section, we use the differential inequality that we derived in Section \ref{sec:diff} to prove our main result.
We always fix $\beta\in[0,\beta_c(\mathbb{T}\times\mathbb{Z}))$ in this section. We first show that the infinite-volume Ising measure $P_{\beta,h}$ has a sharp phase transition in the $h$ variable. Let $\{0\overset{+}{\longleftrightarrow}\partial B_n\}$ denote the event that there is a path of $+$ spins which connects the origin to $\partial B_n$, and let $\{0\overset{+}{\longleftrightarrow}\infty\}$ denote the event that the origin belongs to an infinite $+$ cluster.
\begin{proposition}[\cite{DCRT19}]\label{prop:sharp}
	For each $\beta\in[0,\beta_c(\mathbb{T}\times\mathbb{Z}))$, there exists $h_c=h_c(\beta)\in\mathbb{R}$ such that
	\begin{itemize}
		\item there exists $c>0$ such that $P_{\beta,h}(0\overset{+}{\longleftrightarrow}\infty)\geq c(h-h_c)$ for each $h>h_c$ which is close to $h_c$;
		\item for each $h<h_c$, there exists $c_1=c_1(h)\in(0,\infty)$ such that
		\[P_{B_n,\beta,h}^{+}(0\overset{+}{\longleftrightarrow}\partial B_n)\leq \exp[-c_1n].\]
	\end{itemize}
\end{proposition}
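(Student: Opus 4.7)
The plan is to adapt the Duminil-Copin--Raoufi--Tassion machinery for sharpness of phase transition (as in \cite{DCRT19}) to the Ising model on $\mathbb{T}\times\mathbb{Z}$ with external field, parametrized by $h$ at the fixed value $\beta<\beta_c(\mathbb{T}\times\mathbb{Z})$. Define
\[
h_c(\beta):=\inf\{h\in\mathbb{R}:P_{\beta,h}(0\overset{+}{\longleftrightarrow}\infty)>0\}.
\]
First, I would show $h_c(\beta)\in\mathbb{R}$. For $h$ sufficiently large, the single-site conditional probability $P_{\beta,h}(\sigma_x=+1\mid\sigma_y,\,y\neq x)$ is bounded below uniformly by some $p_0$ arbitrarily close to $1$, so the $+$-spins stochastically dominate Bernoulli site percolation with parameter $p_0$; since $p_c^{\mathrm{site}}(\mathbb{T}\times\mathbb{Z})<1$, percolation occurs. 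The symmetric argument for large $-h$ shows $h_c(\beta)>-\infty$. Monotonicity of $P_{\beta,h}(0\overset{+}{\longleftrightarrow}\infty)$ in $h$ follows from FKG.

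Next, for the sharpness I would work with $\theta_n(h):=P_{B_n,\beta,h}^+(0\overset{+}{\longleftrightarrow}\partial B_n)$ and establish the OSSS-type differential inequality
\[
\theta_n'(h)\geq\frac{c\,n}{\sum_{k=0}^{n-1}\theta_k(h)}\,\theta_n(h)\bigl(1-\theta_n(h)\bigr).
\]
The derivative in $h$ is computable via Russo's formula, yielding a sum of covariances $\sum_x\langle\sigma_x;I_{A_n}\rangle$ (with $A_n=\{0\overset{+}{\longleftrightarrow}\partial B_n\}$). To bound these from below by pivotality/revealment quantities I would use the random-current or Edwards--Sokal representation of the Ising measure with external field (where $h$ is encoded by edges to a ghost vertex), which gives a monotone percolation-type coupling to which the OSSS inequality and the randomized cluster-exploration algorithm of \cite{DCRT19} directly apply.

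From this differential inequality, the Duminil-Copin--Tassion dichotomy gives exactly the two conclusions: either $\sum_k\theta_k(h)$ is bounded uniformly in $n$, in which case $\theta_n(h)\leq\exp(-c_1 n)$, or $\theta(h)\geq c(h-h_c)$ for $h$ slightly above $h_c$. The threshold value separating these regimes coincides (by monotonicity of $\theta$) with $h_c(\beta)$. Finally, upgrading exponential decay under free or minus boundary conditions to the $+$ boundary condition stated in the proposition is immediate from FKG, since $P_{B_n,\beta,h}^{+}$ dominates the infinite-volume measure on decreasing events — though here one works instead directly with the $+$-boundary finite-volume quantity throughout.

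The main obstacle is verifying that the OSSS-based algorithmic argument of \cite{DCRT19} genuinely extends to sharpness in $h$ (rather than in $\beta$): one must identify the correct monotone representation (the FK-Ising model with ghost vertex) and check that the decision tree for cluster exploration captures $\partial_h$-derivatives via pivotal/revealment estimates. Since this representation and the necessary OSSS input are both lattice-independent and insensitive to whether the external parameter is an edge weight or a vertex weight, the adaptation is essentially a repetition of the argument in \cite{DCRT19}, and no new ingredient is needed.
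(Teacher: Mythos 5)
Your overall plan --- run the Duminil-Copin--Raoufi--Tassion OSSS machinery in the parameter $h$ at fixed $\beta<\beta_c$, derive the differential inequality $\theta_n'\geq \frac{cn}{\sum_{k<n}\theta_k}\theta_n(1-\theta_n)$, and conclude via the standard dichotomy lemma --- is exactly what the paper intends: its entire proof is the single sentence that the proof of Theorem~1.2 of \cite{DCRT19} ``also applies to the measure $P_{\beta,h}$'', so you are in fact supplying more detail than the authors do. The finiteness of $h_c$ via stochastic domination by/of Bernoulli site percolation, and the identification of the dichotomy threshold with $h_c$ by monotonicity, are standard and fine.

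The one step that would not work as written is the proposed detour through the Edwards--Sokal/FK representation with a ghost vertex (or random currents). The event $\{0\overset{+}{\longleftrightarrow}\partial B_n\}$ is a site-percolation event for the $+$ spins; it is not measurable with respect to the FK configuration (FK clusters are strictly finer than constant-spin clusters), and the quantities produced by differentiating in $h$ are the spin covariances $\langle\sigma_x;I_{A_n}\rangle$, not derivatives of FK connection probabilities in the ghost-edge weight. So the OSSS inequality and exploration algorithm for the FK-with-ghost model do not ``directly apply'' to $\theta_n$. The correct (and simpler) route, which is what makes the citation of \cite{DCRT19} legitimate, is to apply their OSSS inequality for \emph{monotonic measures} directly to the spin measure $P_{\Lambda,\beta,h}$ itself: the Ising measure with arbitrary field satisfies the FKG lattice condition, one has the exact identity $\partial_h P_{\Lambda,\beta,h}(A_n)=\sum_x\langle\sigma_x;I_{A_n}\rangle_{\Lambda,\beta,h}$, and the monotonic-measure OSSS bound $\theta_n(1-\theta_n)\leq\sum_x\delta_x(T)\langle\sigma_x;I_{A_n}\rangle$ for the cluster-exploration decision tree started from a uniformly random radius yields precisely the differential inequality you want, with no intermediate representation. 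With that replacement your argument coincides with the paper's.
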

\begin{proof}
	One can check that the proof of Theorem 1.2 in \cite{DCRT19} also applies to the measure $P_{\beta,h}$. Indeed,  the two key ingredients of the proof are valid in our setting: the FKG lattice property for the measure and a derivative formula similar to (16) in \cite{DCRT19}. We also note that the argument applies to all $\beta\in[0,\infty)$, although we only need it for $\beta<\beta_c(\mathbb{T}\times\mathbb{Z})$.
\end{proof}

\begin{proof}[Proof of Theorem \ref{thm1}]
	Since the $\gamma(p,h)$ from Proposition \ref{prop:diff} is continuous in $(p,h)\in(0,1)\times\mathbb{R}$, we can find $\delta\in(0,1/4)$, $H>2\delta$ and $\Gamma>0$ such that
	\[\gamma(p,h)\leq \Gamma,~\forall (p,h)\in[\delta,1-\delta]\times[-H,H].\]
	Take $\theta\in(0,\pi/2)$ such that $\tan\theta=1/\Gamma$.
	Suppose that $(p_0,h_0)\in[2\delta,1-2\delta]\times[-H/2,H/2]$ and define
	\[(p(t),h(t)):=(p_0,h_0)+t\delta(\cos\theta,-\sin\theta),~t\in[0,1].\]
	Then it is clear that $(p(t),h(t))\in[\delta,1-\delta]\times[-H,H]$ for each $t\in[0,1]$. Let $\Lambda\subset\mathbb{Z}^3$ be finite and $\mathcal{H}$ be the event defined in Proposition \ref{prop:diff}. Then Proposition \ref{prop:diff} implies that
	\begin{align*}
		\frac{d\mu_{\Lambda,p(t),h(t)}(\mathcal{H})}{dt}&=\delta \cos\theta \left.\frac{\partial\mu_{\Lambda,p,h}(\mathcal{H})}{\partial p}\right|_{(p,h)=(p(t),h(t))}-\delta \sin\theta \left.\frac{\partial\mu_{\Lambda,p,h}(\mathcal{H})}{\partial h}\right|_{(p,h)=(p(t),h(t))}\\
		&\geq\delta [\cos\theta-\Gamma \sin\theta] \left.\frac{\partial\mu_{\Lambda,p,h}(\mathcal{H})}{\partial p}\right|_{(p,h)=(p(t),h(t))}=0,~\forall t\in[0,1].
	\end{align*}
	Integrating the last inequality from $0$ to $1$ gives
	\[\mu_{\Lambda,p(1),h(1)}(\mathcal{H})\geq\mu_{\Lambda,p_0,h_0}(\mathcal{H}).\]
	Setting $\Lambda\uparrow\mathbb{Z}^3$, we get
	\begin{equation}\label{eq:probA}
		\mu_{\mathbb{Z}^3,p(1),h(1)}(\mathcal{H})\geq\mu_{\mathbb{Z}^3,p_0,h_0}(\mathcal{H}),
	\end{equation}
	where $\mu_{\mathbb{Z}^3,p,h}(\mathcal{H})$ is defined by
	\[\mu_{\mathbb{Z}^3,p,h}(\mathcal{H}):=\int_{\{0,1\}^{\mathbb{S}_1\setminus\mathbb{S}_0}}P_{\beta,h}(\mathcal{H} \text{ occurs on }\mathbb{S}_0\cup\omega^{-1}(1)) \mathbb{P}_{p}(d\omega)\]
	with $\mathbb{P}_p$ denoting the independent Bernoulli site percolation measure with parameter $p$.
	We now pick $(p_0,h_0)=(1/2,0)$. Then the inequality \eqref{eq:probA} and the monotonicity of $\mu_{\mathbb{Z}^3,p,h}(\mathcal{H})$ in $p$ imply that
	\[\mu_{\mathbb{Z}^3,1,-\delta\sin\theta}(\mathcal{H})\geq\mu_{\mathbb{Z}^3,1/2+\delta\cos\theta,-\delta\sin\theta}(\mathcal{H})\geq\mu_{\mathbb{Z}^3,1/2,0}(\mathcal{H})\geq \mu_{\mathbb{Z}^3,0,0}(\mathcal{H}).\]
	See Figure \ref{fig:Mon} for an illustration of this monotonicity.
	Recall that $\mathcal{H}:=\{\exists \text{ LR} + \text{crossing in }B_n\cap \mathbb{S}_1 \}$. The structure of $\mathbb{T}$ implies that the complement of $\{\exists \text{ LR} + \text{crossing in }B_n\cap \mathbb{S}_0\}$ is $\{\exists \text{ top to bottom } - \text{crossing in }B_n\cap \mathbb{S}_0 \}$. This, combined with the $\pm$ symmetry of $P_{\beta,0}$, gives that
	\[\mu_{\mathbb{Z}^3,0,0}(\mathcal{H})=P_{\beta,0}(\exists \text{ LR} + \text{crossing in }B_n\cap \mathbb{S}_0)=1/2,~\forall n\in\mathbb{N}.\]
	So we just proved
	\[P_{\beta,-\delta\sin\theta}(\exists \text{ LR} + \text{crossing in }B_n\cap \mathbb{S}_1 )=\mu_{\mathbb{Z}^3,1,-\delta\sin\theta}(\mathcal{H})\geq 1/2,~\forall n\in\mathbb{N}.\]
	Proposition \ref{prop:sharp} then implies that
	\[h_c(\beta)\leq -\delta\sin\theta<0,\]
	and there is an infinite $+$cluster $P_{\beta,h}$ almost surely whenever $h>h_c(\beta)$; this is because if $h_c(\beta)> -\delta\sin\theta$, then we would have
	\begin{align*}
		P_{\beta,-\delta\sin\theta}(\exists \text{ LR} + \text{crossing in }B_n\cap \mathbb{S}_1 )&\leq P_{\beta,-\delta\sin\theta}(\exists x\in B_n  \text{ s.t. } x\overset{+}{\longleftrightarrow} (x+\partial B_n) )\\
		&\leq |B_n| P^{+}_{B_n,\beta,-\delta\sin\theta}(0\overset{+}{\longleftrightarrow} \partial B_n)\rightarrow 0 \text{ as } n\rightarrow \infty.
	\end{align*}
	By $\pm$ symmetry, there is also an infinite $-$cluster $P_{\beta,h}$ almost surely whenever $h<|h_c(\beta)|$. The uniqueness part follows from the Burton-Keane argument \cite{BK89}.
	
	\begin{figure}
		\begin{center}
			\includegraphics{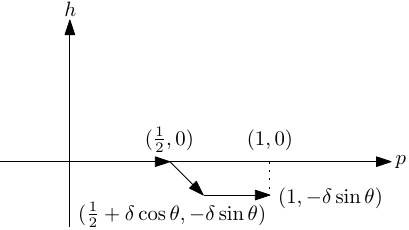}
			\caption{An illustration of the monotonicity of $\mu_{\mathbb{Z}^3,p,h}(\mathcal{H})$ with respect to $p$ and $h$. The line segment from $(1/2,0)$ to $(1/2+\delta \cos \theta, -\delta \sin \theta)$ (with slope $-\tan \theta$) represents the curve $\{(p(t),h(t)): 0\leq t\leq 1\}$ with $(p_0,h_0)=(1/2,0)$. $\mu_{\mathbb{Z}^3,p,h}(\mathcal{H})$ is increasing along the three consecutive directed line segments started from $(0,0)$ and ended at $(1,-\delta \sin\theta)$. }\label{fig:Mon}
		\end{center}
	\end{figure}
\end{proof}

%%%%%%%%%%%%%%%%%%%%%%%%%%%%%%%%%%%%%%%%%%%%%%
%% Example with single Appendix:            %%
%%%%%%%%%%%%%%%%%%%%%%%%%%%%%%%%%%%%%%%%%%%%%%
\begin{appendix}
\section*{A reversed inequality}\label{appn} %% if no title is needed, leave empty \section*{}.
In this appendix, we prove that the reversed inequality (with a different function $\gamma$) in Proposition~\ref{prop:diff} also holds for general increasing events $A\in\mathcal{F}_{\Lambda}$. We do not need this for the proof of our main result, but it might be useful elsewhere. Recall the definition of $\sigma^{\Delta\pm}$ in \eqref{eq:sigmamod}; if $\Delta=\{x\}$, we simply write $\sigma^{x\pm}$. For any increasing event $A\in\mathcal{F}_{\Lambda}$, we define
\[A_x^{+}:=\{\sigma\in\{-1,+1\}^{\Lambda}: \sigma^{x+}\in A\},~A_x^{-}:=\{\sigma\in\{-1,+1\}^{\Lambda}: \sigma^{x-}\in A\}.\]
Note that $A_x^{\pm}$ depend on $x\in\Lambda$ but not on the status of $\sigma_x$.
Then for any $x\in\Lambda$, we have
\begin{gather*}
	\{x \text{ is pivotal for }A\}=A_x^+\setminus A_x^-,\{x \text{ is} + \text{pivotal for }A\}=A\setminus A_x^-,\\
	\{x \text{ is} - \text{pivotal for }A\}=A_x^+\setminus A.
\end{gather*}
\begin{lemma}\label{lem:rev}
	For any $x\in\Lambda$ and any increasing event $A\in\mathcal{F}_{\Lambda}$, we have
	\begin{itemize}
		\item if $A_x^-=\emptyset$ and $P_{\Lambda,\beta,h}(x \text{ is pivotal for }A)\neq0$, then
		\[P_{\Lambda,\beta,h}(x \text{ is pivotal for }A)=\frac{\langle \sigma_x; I_A\rangle_{\Lambda,\beta,h}}{2P_{\Lambda,\beta,h}(\sigma_x=-1)}\frac{1}{P_{\Lambda,\beta,h}(\sigma_x=+1|x \text{ is pivotal for }A)};\]
		\item if $A_x^-\neq\emptyset$, then
		\begin{align*}
			P_{\Lambda,\beta,h}(x \text{ is pivotal for }A)&=\frac{\langle \sigma_x; I_A\rangle_{\Lambda,\beta,h}}{2P_{\Lambda,\beta,h}(\sigma_x=+1|A_x^+)}+\frac{\langle \sigma_x; I_A\rangle_{\Lambda,\beta,h}}{2P_{\Lambda,\beta,h}(\sigma_x=-1|A_x^-)}\\
			&\qquad-\frac{\langle \sigma_x; I_{A_x^+}\rangle_{\Lambda,\beta,h}}{2P_{\Lambda,\beta,h}(\sigma_x=+1|A)}-\frac{\langle \sigma_x; I_{A_x^-}\rangle_{\Lambda,\beta,h}}{2P_{\Lambda,\beta,h}(\sigma_x=-1|A)}.
		\end{align*}
	\end{itemize}
\end{lemma}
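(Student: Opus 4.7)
The entire lemma reduces to an algebraic identity once one unpacks the covariance in terms of joint probabilities. I would begin by recording the elementary expansion, valid for any event $B\in\mathcal{F}_{\Lambda}$,
\[
\langle\sigma_x;I_B\rangle_{\Lambda,\beta,h}=2P_{\Lambda,\beta,h}(\sigma_x=-1)P_{\Lambda,\beta,h}(B,\sigma_x=+1)-2P_{\Lambda,\beta,h}(\sigma_x=+1)P_{\Lambda,\beta,h}(B,\sigma_x=-1),
\]
which follows from the definition of the covariance together with $P_++P_-=1$. I will apply this with $B=A$, $B=A_x^+$, and $B=A_x^-$. To convert between these, the crucial observation is the set-theoretic partition $A\cap\{\sigma_x=+1\}=A_x^+\cap\{\sigma_x=+1\}$ and $A\cap\{\sigma_x=-1\}=A_x^-\cap\{\sigma_x=-1\}$, which is immediate from the definitions of $A_x^\pm$. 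In particular $P_{\Lambda,\beta,h}(A,\sigma_x=+1)=P_{\Lambda,\beta,h}(A_x^+,\sigma_x=+1)$ and similarly for the minus case. Also, since $A$ is increasing, $A_x^-\subseteq A_x^+$ so $\{x\text{ pivotal for }A\}=A_x^+\setminus A_x^-$.

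For the first bullet, $A_x^-=\emptyset$ forces $P_{\Lambda,\beta,h}(A,\sigma_x=-1)=0$, hence $A\subseteq\{\sigma_x=+1\}$ and $\{x\text{ pivotal for }A\}=A_x^+$. The covariance expansion above collapses to $\langle\sigma_x;I_A\rangle_{\Lambda,\beta,h}=2P_-\,P_{\Lambda,\beta,h}(A)$, while the partition identity gives $P_{\Lambda,\beta,h}(A_x^+)=P_{\Lambda,\beta,h}(A)/P_{\Lambda,\beta,h}(\sigma_x=+1\mid A_x^+)$. Multiplying these two equalities yields the claimed formula directly.

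For the second bullet, the strategy is to expand each of the four terms on the right-hand side using the covariance formula, rewrite the conditional probabilities $P(\sigma_x=\pm 1\mid A_x^\pm)$ and $P(\sigma_x=\pm 1\mid A)$ in the form $P(B,\sigma_x=s)/P(B)$, and collect. Using the four shorthand quantities $p_{A_x^\pm,\pm 1}:=P_{\Lambda,\beta,h}(A_x^\pm,\sigma_x=\pm 1)$, of which two coincide with $p_{A,\pm 1}$ by the partition, the claim reduces to showing that a linear combination of $P_\pm$ against these four joint probabilities equals $P_{\Lambda,\beta,h}(A_x^+)-P_{\Lambda,\beta,h}(A_x^-)$. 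After substitution, the coefficients of $P_+$ and $P_-$ each simplify (several mixed products cancel after factoring $p_{A,\pm 1}$) to $p_{A_x^+,+1}+p_{A_x^+,-1}-p_{A_x^-,+1}-p_{A_x^-,-1}$, which is exactly $P_{\Lambda,\beta,h}(A_x^+)-P_{\Lambda,\beta,h}(A_x^-)=P_{\Lambda,\beta,h}(x\text{ pivotal for }A)$, as desired.

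The main obstacle is not conceptual but bookkeeping: keeping the four joint probabilities $p_{A_x^\pm,\pm 1}$ straight while cross-multiplying fractions and collecting terms. The cancellation becomes transparent once one exploits the two identifications $p_{A,+1}=p_{A_x^+,+1}$ and $p_{A,-1}=p_{A_x^-,-1}$ at the right moment, so the cleanest write-up will introduce this shorthand at the start and then verify the $P_+$ and $P_-$ coefficients separately.
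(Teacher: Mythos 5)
Your argument is correct and rests on the same two ingredients as the paper's proof: the expansion $\langle\sigma_x;I_B\rangle_{\Lambda,\beta,h}=2P_{\Lambda,\beta,h}(\sigma_x=-1)P_{\Lambda,\beta,h}(B,\sigma_x=+1)-2P_{\Lambda,\beta,h}(\sigma_x=+1)P_{\Lambda,\beta,h}(B,\sigma_x=-1)$ and the identifications $P_{\Lambda,\beta,h}(A,\sigma_x=\pm1)=P_{\Lambda,\beta,h}(A_x^{\pm},\sigma_x=\pm1)$, the only difference being that the paper derives the four-term decomposition starting from $P_{\Lambda,\beta,h}(A_x^+)-P_{\Lambda,\beta,h}(A_x^-)$ while you verify the stated right-hand side by expanding it in the four joint probabilities (a computation which does close up as you claim, the $P_{\Lambda,\beta,h}(\sigma_x=+1)$ and $P_{\Lambda,\beta,h}(\sigma_x=-1)$ coefficients each reducing to $P_{\Lambda,\beta,h}(A_x^+)-P_{\Lambda,\beta,h}(A_x^-)$). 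This is essentially the same proof read in the opposite direction.
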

\begin{remark}
	Note that if $A_x^-\neq\emptyset$ then $\{A,\sigma_x=-1\}=\{A_x^{-},\sigma_x=-1\}\neq \emptyset$. Since $\{x \text{ is pivotal for }A\}$ and $A_x^{\pm}$ do not depend on the status of $\sigma_x$, all denominators in the lemma except the last two are between $c_1(\beta,h)$ and $c_2(\beta,h)$ with
	\begin{align*}
		&c_1(\beta,h):=\min\{P_{\Lambda,\beta,h}(\sigma_x=+1|\sigma_y=-1,\forall y\neq x), P_{\Lambda,\beta,h}(\sigma_x=-1|\sigma_y=+1,\forall y\neq x)\},\\
		&c_2(\beta,h):=\max\{P_{\Lambda,\beta,h}(\sigma_x=+1|\sigma_y=+1,\forall y\neq x), P_{\Lambda,\beta,h}(\sigma_x=-1|\sigma_y=-1,\forall y\neq x)\}.
	\end{align*}
	So we have pointwise (in $x\in\Lambda$) upper bound between $P_{\Lambda,\beta,h}(x \text{ is pivotal for }A)$ and $\langle \sigma_x; I_A\rangle_{\Lambda,\beta,h}$. But by considering the special event $A=I_{\{\sigma_0=+1\}}$, one can see that a pointwise lower bound doesn't hold for general $A$ (it fails for each $x\neq0$). This suggests that the other direction of the inequality when summing over $x\in\Lambda$, as in Proposition \ref{prop:diff}, is much harder to obtain.
\end{remark}
For ease of reference, we state the following corollary, which corresponds to the reverse inequality in Proposition \ref{prop:diff}.
\begin{corollary}
	Under the assumption of Proposition \ref{prop:diff}, with $\gamma$ replaced by $\tilde{\gamma}$, 
	\[\frac{\partial \mu_{\Lambda,p,h}(\mathcal{H})}{\partial h}\geq \tilde{\gamma}(p,h) \frac{\partial \mu_{\Lambda,p,h}(\mathcal{H})}{\partial p}.\]
	The same inequality also holds for all increasing events $A\in\mathcal{F}_{\Lambda}$. 
\end{corollary}
\begin{proof}[Proof of Lemma \ref{lem:rev}]
	If $A_x^-=\emptyset$, then $\{A \text{ occurs on }\Lambda\setminus\{x\}\}=\emptyset$. So by \eqref{eq:pivcom}, we have
	\begin{equation}\label{eq:covtoA}
		\begin{split}
			\langle \sigma_x; I_A\rangle_{\Lambda,\beta,h}&=\langle \sigma_x; I_AI_{\{x \text{ is pivotal for }A\}}\rangle_{\Lambda,\beta,h}=\langle \sigma_xI_AI_{\{x \text{ is pivotal for }A\}}\rangle_{\Lambda,\beta,h}\\
			&\qquad-\langle \sigma_x\rangle_{\Lambda,\beta,h} \langle I_AI_{\{x \text{ is pivotal for }A\}}\rangle_{\Lambda,\beta,h}\\
			&=\left[1-\langle \sigma_x\rangle_{\Lambda,\beta,h}\right]P_{\Lambda,\beta,h}(A \cap \{x \text{ is pivotal for }A\})\\
			&=2P_{\Lambda,\beta,h}(\sigma_x=-1)P_{\Lambda,\beta,h}(A),
		\end{split}
	\end{equation}
	where we have used $A\subset A_x^+=\{x \text{ is pivotal for }A\}$ in the last inequality. On the other hand, we also have
	\begin{align*}
		&P_{\Lambda,\beta,h}(x \text{ is pivotal for }A)=P_{\Lambda,\beta,h}(x \text{ is pivotal for }A, \sigma_x=+1)\\
		&\qquad+P_{\Lambda,\beta,h}(x \text{ is pivotal for }A, \sigma_x=-1)\\
		=&\left[1+\frac{P_{\Lambda,\beta,h}(\sigma_x=-1|x \text{ is pivotal for }A)}{P_{\Lambda,\beta,h}(\sigma_x=+1|x \text{ is pivotal for }A)}\right]P_{\Lambda,\beta,h}(x \text{ is pivotal for }A, \sigma_x=+1)\\
		=&\frac{P_{\Lambda,\beta,h}(A_x^+, \sigma_x=+1)}{P_{\Lambda,\beta,h}(\sigma_x=+1|x \text{ is pivotal for }A)}=\frac{P_{\Lambda,\beta,h}(A)}{P_{\Lambda,\beta,h}(\sigma_x=+1|x \text{ is pivotal for }A)}.
	\end{align*}
	This and \eqref{eq:covtoA} completes the proof of the first identity in the lemma.
	
	For the second identity, we have
	\begin{align*}
		&P_{\Lambda,\beta,h}(x \text{ is pivotal for }A)=P_{\Lambda,\beta,h}(A_x^+)-P_{\Lambda,\beta,h}(A_x^-)\\
		=&\frac{P_{\Lambda,\beta,h}(A,\sigma_x=+1)}{P_{\Lambda,\beta,h}(\sigma_x=+1|A_x^+)}-\frac{P_{\Lambda,\beta,h}(A,\sigma_x=-1)}{P_{\Lambda,\beta,h}(\sigma_x=-1|A_x^-)}\\
		=&\frac{P_{\Lambda,\beta,h}(A,\sigma_x=+1)-P_{\Lambda,\beta,h}(A)P_{\Lambda,\beta,h}(\sigma_x=+1)}{P_{\Lambda,\beta,h}(\sigma_x=+1|A_x^+)}\\
		&\qquad+\frac{P_{\Lambda,\beta,h}(A)P_{\Lambda,\beta,h}(\sigma_x=-1)-P_{\Lambda,\beta,h}(A,\sigma_x=-1)}{P_{\Lambda,\beta,h}(\sigma_x=-1|A_x^-)}\\
		&\qquad+P_{\Lambda,\beta,h}(A)\frac{P_{\Lambda,\beta,h}(A_x^+)P_{\Lambda,\beta,h}(\sigma_x=+1)-P_{\Lambda,\beta,h}(A_x^+,\sigma_x=+1)}{P_{\Lambda,\beta,h}(A,\sigma_x=+1)}\\
		&\qquad+P_{\Lambda,\beta,h}(A)\frac{P_{\Lambda,\beta,h}(A_x^-,\sigma_x=-1)-P_{\Lambda,\beta,h}(A_x^-)P_{\Lambda,\beta,h}(\sigma_x=-1)}{P_{\Lambda,\beta,h}(A,\sigma_x=-1)},
	\end{align*}
	which clearly implies the desired identity.
\end{proof}
\end{appendix}

%%%%%%%%%%%%%%%%%%%%%%%%%%%%%%%%%%%%%%%%%%%%%%
%% Support information, if any,             %%
%% should be provided in the                %%
%% Acknowledgements section.                %%
%%%%%%%%%%%%%%%%%%%%%%%%%%%%%%%%%%%%%%%%%%%%%%
\begin{acks}[Acknowledgments]
The authors thank Federico Camia, Frederik Klausen and Chuck Newman for useful comments on an early draft. The authors also thank the reviewers for useful comments and suggestions.
\end{acks}

%%%%%%%%%%%%%%%%%%%%%%%%%%%%%%%%%%%%%%%%%%%%%%
%% Funding information, if any,             %%
%% should be provided in the                %%
%% funding section.                         %%
%%%%%%%%%%%%%%%%%%%%%%%%%%%%%%%%%%%%%%%%%%%%%%
\begin{funding}
This research was partially supported by NSFC Tianyuan Key Program Project (No.12226001) and NSFC General Program (No. 12271284).
\end{funding}

\bibliographystyle{imsart-number} % Style BST file (imsart-number.bst or imsart-nameyear.bst)
\bibliography{reference}       % Bibliography file (usually '*.bib')

\end{document}